
\documentclass{amsart}
\usepackage{eurosym}
\usepackage{amsfonts}
\usepackage{amssymb}
\usepackage{graphicx}
\usepackage{amsmath}

\setcounter{MaxMatrixCols}{10}

\newtheorem{theorem}{Theorem}

\newtheorem{corollary}[theorem]{Corollary}

\newtheorem{definition}[theorem]{Definition}

\newtheorem{lemma}[theorem]{Lemma}

\newtheorem{proposition}[theorem]{Proposition}
\newtheorem{remark}[theorem]{Remark}

\begin{document}
\title{Decomposition of Singular Matrices into Idempotents}
\author{Adel Alahmadi, S. K. Jain, and Andr\'e Leroy}
\address{Adel Alahmedi\\
Department of Mathematics\\
King abdulaziz University\\
Jeddah, SA\\
Email:adelnife2@yahoo.com;\\
S. K. Jain\\
Department of Mathematics\\
King Andulaziz University Jeddah, SA,and\\
Ohio University, USA\\
Email:jain@ohio.edu\\
Andre Leroy\\
Facult\'e Jean Perrin\\
Universit\'e d'Artois\\
Lens, France\\
Email:andre.leroy55@gmail.com}
\keywords{Idempotent, Singular Matrix, Local Ring, Principal Right Ideal
Domain, B\'{e}zout Domain, Hermite Domain, Injective Module, Regular Ring,
Unit Regular Ring, Stable Range 1}
\maketitle

\begin{abstract}
In this paper we provide concrete constructions of idempotents to represent
typical singular matrices over a given ring as a product of idempotents and
apply these factorizations for proving our main results. 

We generalize works due to Laffey (
Products of idempotent matrices. Linear Multilinear A. 1983) and Rao (Products of idempotent matrices. Linear Algebra Appl. 2009) to noncommutative setting and
fill in the gaps in the original proof of Rao's main theorems.  We also consider singular matrices over
B\'ezout domains as to when such a matrix is a product of idempotent
matrices. 
\end{abstract}

\bigskip

\section{Introduction and definitions}

It was shown by Howie \cite{Ho} that every mapping from a finite set $X$ to
itself with image of cardinality $\leq $ $card X\,-1$\ is a product of
idempotent mappings. Erd\"{o}s \cite{E} (cf. also [14])
showed that every singular square
matrix over a field can be expressed as a product of idempotent matrices and
this was generalized by several authors to certain classes of rings, in
particular, to division rings and euclidean domains \cite{L}. Turning to
singular elements let us mention two results: Rao \cite{Bh} characterized,
via continued fractions, singular matrices over a commutative PID that can
be decomposed as a product of idempotent matrices and Hannah-O'Meara \cite%
{Ha} showed, among other results, that for a right self-injective regular
ring $R$, an element $a$ is a product of idempotents if and only if $%
Rr.ann(a)=l.ann(a)R$= $R(1-a)R$.

The purpose of this paper is to provide concrete constructions of
idempotents to represent typical singular matrices over a given ring as a
product of idempotents and to apply these factorizations for proving our
main results. Proposition \ref{product of idempotent matrices E=AB with BA=1}
and Theorem \ref{Elementary Hermite and IP_2 implies IP} fill in the gaps in
Rao's proof of a decomposition of singular matrices over principal ideal
domains (cf.~\cite{Bh}, Theorems 5 and 7), and simultaneously generalize
these results. We show that over a local ring $R$ (not necessarily
commutative), if every $2\times 2$ matrix $A$ with $r.ann(A)\neq 0$ is a product of idempotent matrices, then $R$ must be a domain (Theorem %
\ref{Local ring with IP are domains}). We prove the existence of a
decomposition into product of idempotents for any matrix $A$ with $%
l.ann(A)\neq 0,$ over a local domain (not necessarily commutative) with
Jacobson radical $J(R)=gR$ such that $\cap_n J(R)^n=0$ (Theorem \ref{local
ring with principal generator for J satisfies IP2}).

Let $R$ be a B\'ezout domain such that every $2\times 2$ singular matrix is
a product of idempotent matrices. Theorem \ref{Elementary Hermite and IP_2
implies IP} shows that if every $2\times 2$ invertible matrix over $R$ is a
product of elementary matrices and diagonal matrices with invertible
diagonal entries then every $n \times n$ singular matrix is a product of
idempotent matrices;

\noindent The converse of Theorem \ref{Elementary Hermite and IP_2 implies
IP} is true for commutative B\'ezout domain, that is, if every $n\times n$
singular matrix over such a domain is a product of idempotent matrices then
every $2\times 2$ invertible matrix is a product of elementary matrices and
diagonal matrices with invertible diagonal entries (Corollary \ref%
{Commutative Bezout and IP2 implies elementary Hermite}). Finally, Theorem %
\ref{Injective modules such that endo has the IP property} studies the
condition when each right singular element of the endomorphism ring of an
injective module is a product of projections.  This shows, in particular, that
each linear transformation of a vector space, which is right singular, is a
product of projections if and only if the vector space is finite-dimensional.


Let us now give the main definitions and fix our terminology.

All rings considered are nonzero rings with an identity element denoted by $%
1 $, and need not be commutative. A ring $R$ is called a local ring if it
has a unique maximal right ideal (equivalently, unique maximal left ideal).
For example, power series ring $F[[x]]$ over a field $F$ and the
localization $Z_{(p)}$ of the ring of integers $Z$ are local rings. A ring $%
R $ is called projective-free if each finitely generated projective right
(equivalently, left) module is free of unique rank. Every local ring is
projective-free. A ring $R$ is a principal right (left) ideal ring if each
right (left) ideal is principal. A right $R$-module $M$ is called injective
if every $R$-homomorphism from a right ideal of $R$ to $M$ can be extended
to an $R$-homomorphism from $R$ to $M.$ Clearly, every vector space over a
field is injective. A ring $R$ is called right self-injective if it is
injective as a right $R $-module. A ring $R$ is called von Neumann regular
if for each element $a\in R$, there exists an element $x\in R$ such that $%
axa=a$. A ring $R$ is called unit regular if for each element $a\in R$,
there exists an invertible element $u$ such that $aua=a $. A ring $R$ is
called Dedekind finite if for all $a,b\in R $, $ab=1$ implies $ba=1.$

An $n\times n$ matrix is called elementary if it is of the form $%
I_{n}+ce_{ij}$, $c\in R$ with $i\neq j$. 
A ring $R$ has a stable range $1$ if for any $a,b\in R$ with $aR+bR=R,$
there exists $x\in R$ such that $a+bx\in U(R)$, where $U(R)$ is the set of
invertible elements of $R$. A ring $R$ is right (left) B\'{e}zout if any
finitely generated right (left) ideal of $R$ is principal. Hermite rings
have been defined differently by different authors in the literature.
Following Kaplansky, we call $R$ to be a right (left) Hermite ring, if for
any two elements $a,b\in R$ there exists a $2\times 2$ invertible matrix $P$
and an element $d\in R$ such that $(a,b)P=(d,0)$ ($P(a,b)^{t}=(d,0)^{t} $).
Lam (\cite{La2}, section I, 4) calls this ring as K-Hermite ring. By a
Hermite (B\'ezout) ring we mean a ring which is both right and left Hermite
(B\'ezout). Amitsur showed that a ring $R$ is a right (left) Hermite domain
if and only if $R$ is a right (left) B\'{e}zout domain. Theorem \ref{Hermite
is Bezout} in this paper provides an alternative proof of Amitsur's theorem.

A ring $R$ is $GE_{2}$ if any invertible $2 \times 2$ matrix is a product of
elementary matrices and diagonal matrices with invertible diagonal entries.

A right unimodular row is a row $(a_{1},\dots ,a_{n})\in R^{n}$ with the
condition $\sum_{i=1}^{n}a_{i}R=R$. A right unimodular row is completable if
it is a row (equivalently, the bottom row) of an invertible matrix.

An element $a$ in a ring $R$ will be called right (left) singular if $r.ann$ 
$(a)$ $\neq 0$ $(l.ann$ $(a)$ $\neq 0$). An element is singular if it is
both left and right singular. $U(R)$ will denote the set of invertible
elements of a ring $R$. $M_{n\times m}(R)$ stands for the set of $n\times m$
matrices over the ring $R$. The ring of $n\times n$ matrices over $R$ will
be denoted by $M_n(R)$. The group of $n\times n$ invertible matrices over $R$
is denoted by $GL_n(R)$.

\section{Preliminaries.}

We begin with an elementary lemma which works like our reference table for
the proofs of our results. Note that one can obtain additional
factorizations from a given factorization into idempotent matrices by taking
conjugations.

\begin{lemma}
(Table of factorizations) \label{table of factorizations 2 by 2} Let $R$ be
any ring and let $a,b,c\in R$. Then

\begin{enumerate}
\item[(a)] $\ \ \ \left( 
\begin{array}{cc}
a & 0 \\ 
0 & 0%
\end{array}%
\right) =\left( 
\begin{array}{cc}
1 & a \\ 
0 & 0%
\end{array}%
\right) \left( 
\begin{array}{cc}
0 & 0 \\ 
0 & 1%
\end{array}%
\right) \left( 
\begin{array}{cc}
1 & 0 \\ 
1 & 0%
\end{array}%
\right),$

\item[($a^{\prime}$)] $\ \ \left( 
\begin{array}{cc}
0 & 0 \\ 
a & 0%
\end{array}%
\right) =\left( 
\begin{array}{cc}
0 & 0 \\ 
a & 1%
\end{array}%
\right) \left( 
\begin{array}{cc}
1 & 0 \\ 
0 & 0%
\end{array}%
\right) \left( 
\begin{array}{cc}
1 & 0 \\ 
1 & 0%
\end{array}%
\right) $,

\item[(b)] $\left( 
\begin{array}{cc}
a & ac \\ 
0 & 0%
\end{array}%
\right) =\left( 
\begin{array}{cc}
1 & a \\ 
0 & 0%
\end{array}%
\right) \left( 
\begin{array}{cc}
0 & 0 \\ 
0 & 1%
\end{array}%
\right) \left( 
\begin{array}{cc}
1 & 0 \\ 
1 & 0%
\end{array}%
\right) \left( 
\begin{array}{cc}
1 & c \\ 
0 & 0%
\end{array}%
\right) $,

\item[($b^{\prime}$)] $\left( 
\begin{array}{cc}
a & 0 \\ 
ca & 0%
\end{array}%
\right) =\left( 
\begin{array}{cc}
1 & 0 \\ 
c & 0%
\end{array}%
\right) \left( 
\begin{array}{cc}
1 & 1 \\ 
0 & 0%
\end{array}%
\right) \left( 
\begin{array}{cc}
0 & 0 \\ 
0 & 1%
\end{array}%
\right) \left( 
\begin{array}{cc}
1 & 0 \\ 
a & 0%
\end{array}%
\right) $,

\item[(c)] $\left( 
\begin{array}{cc}
ac & a \\ 
0 & 0%
\end{array}%
\right) =\left( 
\begin{array}{cc}
1 & a \\ 
0 & 0%
\end{array}%
\right) \left( 
\begin{array}{cc}
0 & 0 \\ 
c & 1%
\end{array}%
\right) $,

\item[($c^{\prime}$)] $\left( 
\begin{array}{cc}
ca & 0 \\ 
a & 0%
\end{array}%
\right) =\left( 
\begin{array}{cc}
0 & c \\ 
0 & 1%
\end{array}%
\right) \left( 
\begin{array}{cc}
1 & 0 \\ 
a & 0%
\end{array}%
\right) $,

\item[(d)] with $b\in U(R)$, $%
\begin{pmatrix}
a & b \\ 
0 & 0%
\end{pmatrix}%
=%
\begin{pmatrix}
b(b^{-1}a) & b \\ 
0 & 0%
\end{pmatrix}%
$ can be factorized as in (c) and $%
\begin{pmatrix}
a & 0 \\ 
b & 0%
\end{pmatrix}%
=%
\begin{pmatrix}
(ab^{-1})b & 0 \\ 
b & 0%
\end{pmatrix}%
$ can be factorized as in ($c^{\prime }$),

\item[(e)] with $a\in U(R)$, $%
\begin{pmatrix}
a & b \\ 
0 & 0%
\end{pmatrix}%
=%
\begin{pmatrix}
a & a(a^{-1}b) \\ 
0 & 0%
\end{pmatrix}%
$ can be factorized as in (b) and $%
\begin{pmatrix}
a & 0 \\ 
b & 0%
\end{pmatrix}%
=%
\begin{pmatrix}
a & 0 \\ 
(ba^{-1})a & 0%
\end{pmatrix}%
$ can be factorized as in ($b^{\prime }$).
\end{enumerate}
\end{lemma}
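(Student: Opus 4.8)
This lemma is a list of $2\times 2$ matrix identities, so the plan is simply to verify each one by a direct multiplication; there is no clever idea involved, only the need to respect the order of factors, since $R$ need not be commutative. Accordingly I would not reorganize the statement at all, but would carry out the eight verifications essentially in the order listed, first recording the one observation that makes the table useful.

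That observation is that every matrix appearing as a factor on a right-hand side is idempotent. Up to the choice of the non-fixed entry, these factors have one of the four shapes $\begin{pmatrix}1&x\\0&0\end{pmatrix}$, $\begin{pmatrix}1&0\\x&0\end{pmatrix}$, $\begin{pmatrix}0&x\\0&1\end{pmatrix}$, $\begin{pmatrix}0&0\\x&1\end{pmatrix}$ (with $x$ one of $a,b,c,0,1$, or $x=b^{-1}a$, $a^{-1}b$, etc. in parts (d)--(e)), together with the constant matrices $\begin{pmatrix}1&0\\0&0\end{pmatrix}$, $\begin{pmatrix}0&0\\0&1\end{pmatrix}$, $\begin{pmatrix}1&1\\0&0\end{pmatrix}$, $\begin{pmatrix}1&0\\1&0\end{pmatrix}$. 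In every case squaring the matrix returns it — a one-line check; concretely these are the corner idempotents $e_{11},e_{22}$ perturbed by an off-diagonal term.

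Then I would run through the identities. For (a), grouping the first two factors gives $\begin{pmatrix}1&a\\0&0\end{pmatrix}\begin{pmatrix}0&0\\0&1\end{pmatrix}=\begin{pmatrix}0&a\\0&0\end{pmatrix}$, and multiplying on the right by $\begin{pmatrix}1&0\\1&0\end{pmatrix}$ yields $\begin{pmatrix}a&0\\0&0\end{pmatrix}$; identity $(a')$ is the same computation with the two rows (equivalently, the two coordinates) interchanged. Identity (b) is just (a) with the extra idempotent $\begin{pmatrix}1&c\\0&0\end{pmatrix}$ appended on the right, which sends $\begin{pmatrix}a&0\\0&0\end{pmatrix}$ to $\begin{pmatrix}a&ac\\0&0\end{pmatrix}$; this is precisely where noncommutativity must be watched, since in the mirror identity $(b')$ the entry that occurs is $ca$, not $ac$. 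Identities (c) and $(c')$ are single two-factor products and are immediate. Finally, for (d) I would write $a=b(b^{-1}a)$ (resp. $a=(ab^{-1})b$), using $b\in U(R)$, so that the matrix acquires the form to which (c) (resp. $(c')$) applies; similarly for (e) one writes $b=a(a^{-1}b)$ (resp. $b=(ba^{-1})a$), using $a\in U(R)$, and invokes (b) (resp. $(b')$).

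I do not expect a genuine obstacle here: once the idempotency of the eight factor-shapes is recorded, everything reduces to bookkeeping. The only points demanding care are keeping each product in the correct order and, in the primed identities, remembering that the product that shows up is the one on the opposite side ($ca$ where the unprimed version had $ac$).
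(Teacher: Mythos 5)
Your proof is correct and matches the paper's (implicit) approach: the paper states this lemma as a table without proof, the identities being verifiable exactly as you describe, by checking that each factor is idempotent and multiplying out in the stated order. Your reductions of (d) and (e) to (c), $(c')$, (b), $(b')$ via $a=b(b^{-1}a)$ etc.\ are precisely what the statement intends.
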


\bigskip

In the next lemma, we consider factorizations of $n\times n$ matrices:

\begin{lemma}
\label{Factorizations of 3 by 3 matrices} Let $R$ be any ring and $A\in
M_2(R)$ be either

\begin{enumerate}
\item[(a)] an elementary matrix,

\item[(b)] $%
\begin{pmatrix}
0 & 1 \\ 
1 & 0%
\end{pmatrix}%
$,

\item[(c)] a diagonal matrix,

\item[(d)] $%
\begin{pmatrix}
a & b \\ 
0 & 0%
\end{pmatrix}%
$ or $%
\begin{pmatrix}
a & 0 \\ 
b & 0%
\end{pmatrix}%
,\; a,b\in R.$
\end{enumerate}

Then, for $n\ge 3$, the $n\times n$ matrix $%
\begin{pmatrix}
A & 0 \\ 
0 & 0%
\end{pmatrix}%
$ is a product of idempotent matrices, where zero blocs are of appropriate
sizes.
\end{lemma}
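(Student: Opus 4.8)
The plan is to reduce each of the four cases to the $2\times 2$ factorizations recorded in Lemma~\ref{table of factorizations 2 by 2}, padded out to size $n$. The key observation is that an $n\times n$ block matrix $\begin{pmatrix} E & 0\\ 0 & I\end{pmatrix}$ is idempotent whenever $E$ is, and that $\begin{pmatrix} A & 0\\ 0 & 0\end{pmatrix}=\begin{pmatrix} A & 0\\ 0 & I_k\end{pmatrix}\begin{pmatrix} I_2 & 0\\ 0 & 0_k\end{pmatrix}$ where the second factor is a diagonal idempotent. So it suffices to write $\begin{pmatrix} A & 0\\ 0 & I_{n-2}\end{pmatrix}$ as a product of idempotents, and for that I want to enlarge the ambient size just enough that a ``shifted'' copy of $A$ together with a rank-deficient companion block becomes available to feed into Lemma~\ref{table of factorizations 2 by 2}.

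First, for case (d): the matrices $\begin{pmatrix} a & b\\ 0 & 0\end{pmatrix}$ and $\begin{pmatrix} a & 0\\ b & 0\end{pmatrix}$ are \emph{already} singular $2\times 2$ matrices, but Lemma~\ref{table of factorizations 2 by 2} only factors them when one of $a,b$ is a unit or they have the special forms $(a,ac)$, $(ac,a)$, etc. The point of going to $n\ge 3$ is that in $M_3(R)$ we can write $\begin{pmatrix} a & b & 0\\ 0 & 0 & 0\\ 0 & 0 & 0\end{pmatrix}$ as a product $\begin{pmatrix} a & b & 0\\ 0 & 0 & 1\\ 0 & 0 & 0\end{pmatrix}\cdot(\text{idempotents})$ by first observing that $\begin{pmatrix} a & b & 0\\ 0 & 0 & 1\\ 0 & 0 & 0\end{pmatrix}$ has its first row equal to $(a,b,0)=(a,b,0)$ while its structure $\begin{pmatrix} * & * & 0\\ 0 & 0 & 1\\ 0&0&0\end{pmatrix}$ is idempotent iff the first row times the relevant column vanishes — which it does here. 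Concretely, $\begin{pmatrix} a & b & 0\\ 0 & 0 & 1\\ 0 & 0 & 0\end{pmatrix}$ is idempotent (square it and check), so $\begin{pmatrix} A & 0\\ 0 & 0\end{pmatrix} = \begin{pmatrix} a & b & 0\\ 0&0&1\\ 0&0&0\end{pmatrix}\begin{pmatrix} 1&0&0\\ 0&0&0\\ 0&0&0\end{pmatrix}\begin{pmatrix} 1&0&0\\ 1&0&0\\ 0&0&0\end{pmatrix}$ (a direct $3\times 3$ analogue of part (a)), and similarly for the column version via transposing/using ($a'$). For $n>3$ one pads with an identity block and multiplies by a diagonal idempotent as above.

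For cases (a), (b), (c): an elementary matrix, the swap $\begin{pmatrix} 0&1\\1&0\end{pmatrix}$, and a diagonal matrix $\mathrm{diag}(u,v)$ are all \emph{invertible} in general, so $\begin{pmatrix} A & 0\\ 0 & 0\end{pmatrix}$ cannot be a product of idempotents when $A$ is invertible unless we use the extra zero block. The trick here is the standard one: in $M_3(R)$, $\mathrm{diag}(A,0)$ where $A\in GL_2(R)$ — take $A=\begin{pmatrix} a&b\\c&d\end{pmatrix}$ invertible — can be realized because $\begin{pmatrix} A & 0\\ 0 & 0\end{pmatrix}$ is conjugate, via a permutation, to $\begin{pmatrix} 0 & 0\\ 0 & 0\end{pmatrix}$ with $A$ in an off-diagonal-friendly position, allowing us to invoke parts (b), (b$'$), (c), (c$'$) of Lemma~\ref{table of factorizations 2 by 2} on the $2\times 2$ blocks $\begin{pmatrix} X & Y\\ 0 & 0\end{pmatrix}$ that appear. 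More simply: write $A=\begin{pmatrix} a & 0\\ c & 0\end{pmatrix}+\begin{pmatrix} 0 & b\\ 0 & d\end{pmatrix}$-type splittings are not multiplicative, so instead use that $\begin{pmatrix} A & 0\\ 0 & 0\end{pmatrix} = \begin{pmatrix} I_2 & 0\\ 0 & 0\end{pmatrix}\begin{pmatrix} A & 0\\ 0 & 0\end{pmatrix}$ and factor $\begin{pmatrix} A & 0 \\ A' & 0\end{pmatrix}$-type $n\times 2$-supported matrices. The cleanest route: since $A$ is invertible, $\begin{pmatrix} A & 0\\ 0 & 0\end{pmatrix}$ in $M_3$ equals $\begin{pmatrix} A & 0 \\ 0 & 0\end{pmatrix}$ where we regard the last row/column as the ``zero padding''; apply Lemma~\ref{table of factorizations 2 by 2}(d) or (e) to the $2\times 2$ block $\begin{pmatrix} A_{\text{row}} \\ 0\end{pmatrix}$ sitting in rows $\{1,3\}$ or $\{2,3\}$, repeatedly, peeling off one elementary/diagonal factor at a time; each such factor lifts to an idempotent product by (a)–(c) applied in the relevant $2\times2$ coordinate plane, and the product of the resulting idempotent $n\times n$ matrices telescopes to $\mathrm{diag}(A,0)$.

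The main obstacle I anticipate is bookkeeping rather than conceptual: making sure that when $A$ is invertible the ``extra'' zero row/column is genuinely used to absorb the non-idempotency, and that the $2\times 2$ lemma is applied in the right pair of coordinate planes so that the off-block entries introduced at one step are cleared at the next. One has to be a little careful that the idempotents used in different coordinate planes do not interfere — i.e., that a $\begin{pmatrix} 0&0\\0&1\end{pmatrix}$-type factor placed in rows $\{1,3\}$ is still idempotent as an $n\times n$ matrix and commutes appropriately with the padding. Once the $n=3$ case is settled for all four types, the passage to general $n\ge 3$ is immediate: $\mathrm{diag}(A,0_{n-2}) = \mathrm{diag}(A,0_1,I_{n-3})\cdot \mathrm{diag}(I_3,0_{n-3})$, the second factor is a diagonal idempotent, and the first is a product of idempotents by the $n=3$ result embedded in the top-left $3\times3$ corner together with the identity padding (which keeps each idempotent idempotent).
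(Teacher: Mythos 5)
Your proposal has genuine gaps, and the most concrete one is that the key matrix you exhibit in case (d) is not idempotent. Squaring $M=\begin{pmatrix} a & b & 0\\ 0 & 0 & 1\\ 0 & 0 & 0\end{pmatrix}$ gives first row $(a^2,\,ab,\,b)$ and second row $(0,0,0)$, so $M^2\neq M$ in general; moreover your displayed product
$M\begin{pmatrix}1&0&0\\0&0&0\\0&0&0\end{pmatrix}\begin{pmatrix}1&0&0\\1&0&0\\0&0&0\end{pmatrix}$
evaluates to $\begin{pmatrix}a&0&0\\0&0&0\\0&0&0\end{pmatrix}$, not to the target. The correct ``$3\times 3$ analogue of part (a)'' puts $a,b$ in the column that gets annihilated, not in the row that survives: the paper uses the genuine idempotent $\begin{pmatrix}1&0&a\\0&1&b\\0&0&0\end{pmatrix}$ and the identity $\begin{pmatrix}a&0&0\\b&0&0\\0&0&0\end{pmatrix}=\begin{pmatrix}1&0&a\\0&1&b\\0&0&0\end{pmatrix}\begin{pmatrix}0&0&0\\0&0&0\\0&0&1\end{pmatrix}\begin{pmatrix}1&0&0\\1&0&0\\1&0&0\end{pmatrix}$.

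Cases (a)--(c) are where the real content of the lemma lies, and your treatment of them does not amount to a proof. Your opening reduction --- split off $\mathrm{diag}(I_2,0_{n-2})$ and then factor $\mathrm{diag}(A,I_{n-2})$ into idempotents --- is impossible precisely in these cases: an invertible product of idempotents must equal the identity (an idempotent with a right inverse is $I$), and an elementary matrix, the swap, and a diagonal matrix with unit entries are all invertible. You acknowledge this later, but the replacement argument (``peel off factors\dots each lifts to an idempotent product by (a)--(c) applied in the relevant coordinate plane'') is circular, since it invokes the very cases being proved, and it never produces the $3\times 3$ idempotents that are actually required. The paper's proof is a bare-hands construction: for $A=I_2+ae_{12}$ it sandwiches $\begin{pmatrix}1&a&1\\0&1&0\\0&-a&0\end{pmatrix}$ (which one checks \emph{is} idempotent) between two copies of $\mathrm{diag}(1,1,0)$, and analogous explicit three- or four-factor products handle the swap and the diagonal case. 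Nothing in your sketch supplies such matrices, so cases (a)--(c) remain unproved; only your final padding step from $n=3$ to general $n$ is sound.
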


\begin{proof}
We will treat the case when $n=3$. The general case is similar.

(a) Let us, for instance, choose an elementary matrix $A= I_2+ae_{12}$.

$A=\left( 
\begin{array}{cc}
1 & a \\ 
0 & 1%
\end{array}%
\right) $. Then $%
\begin{pmatrix}
1 & a & 0 \\ 
0 & 1 & 0 \\ 
0 & 0 & 0%
\end{pmatrix}%
=%
\begin{pmatrix}
1 & 0 & 0 \\ 
0 & 1 & 0 \\ 
0 & 0 & 0%
\end{pmatrix}%
\begin{pmatrix}
1 & a & 1 \\ 
0 & 1 & 0 \\ 
0 & -a & 0%
\end{pmatrix}%
\begin{pmatrix}
1 & 0 & 0 \\ 
0 & 1 & 0 \\ 
0 & 0 & 0%
\end{pmatrix}%
$.

(b) $%
\begin{pmatrix}
0 & 1 & 0 \\ 
1 & 0 & 0 \\ 
0 & 0 & 0%
\end{pmatrix}%
=%
\begin{pmatrix}
1 & 0 & 0 \\ 
0 & 1 & 0 \\ 
0 & 0 & 0%
\end{pmatrix}
\begin{pmatrix}
0 & 1 & 1 \\ 
1 & 0 & -1 \\ 
-1 & 1 & 2%
\end{pmatrix}
\begin{pmatrix}
1 & 0 & 0 \\ 
0 & 1 & 0 \\ 
0 & 0 & 0%
\end{pmatrix}%
$.

(c) $%
\begin{pmatrix}
a & 0 & 0 \\ 
0 & b & 0 \\ 
0 & 0 & 0%
\end{pmatrix}%
= 
\begin{pmatrix}
1 & 0 & -1 \\ 
0 & 1 & 0 \\ 
0 & 0 & 0%
\end{pmatrix}
\begin{pmatrix}
1 & 0 & 0 \\ 
0 & 1 & 0 \\ 
1-a & 0 & 0%
\end{pmatrix}
\begin{pmatrix}
1 & 0 & 0 \\ 
0 & 1 & -1 \\ 
0 & 0 & 0%
\end{pmatrix}
\begin{pmatrix}
1 & 0 & 0 \\ 
0 & 1 & 0 \\ 
0 & 1-b & 0%
\end{pmatrix}
$.

(d) Let us consider the case $%
\begin{pmatrix}
a & 0 & 0 \\ 
b & 0 & 0 \\ 
0 & 0 & 0%
\end{pmatrix}%
=%
\begin{pmatrix}
1 & 0 & a \\ 
0 & 1 & b \\ 
0 & 0 & 0%
\end{pmatrix}%
\begin{pmatrix}
0 & 0 & 0 \\ 
0 & 0 & 0 \\ 
0 & 0 & 1%
\end{pmatrix}%
\begin{pmatrix}
1 & 0 & 0 \\ 
1 & 0 & 0 \\ 
1 & 0 & 0%
\end{pmatrix}%
$.
\end{proof}

\begin{lemma}
\label{I.P. property implies Dedekind finite} Let $R$ be a ring. If each
right (left) singular element is a product of idempotents, then $R$ is
Dedekind finite.
\end{lemma}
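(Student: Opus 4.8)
The plan is to argue by contradiction: suppose $R$ is not Dedekind finite, so there exist $a,b\in R$ with $ab=1$ but $ba\neq 1$, and we derive a contradiction from the hypothesis.

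The first step is to exhibit a singular element to which the hypothesis applies. Since $(ba)^2=b(ab)a=ba$, the element $ba$ is idempotent, and $ba\neq 1$ forces $1-ba\neq 0$. Now $a(1-ba)=a-aba=a-a=0$, so $0\neq 1-ba\in r.ann(a)$; that is, $a$ is right singular. Symmetrically $(1-ba)b=b-bab=b-b=0$, so $0\neq 1-ba\in l.ann(b)$ and $b$ is left singular.

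The second step uses the key elementary fact that a one-sidedly invertible idempotent is trivial: if $e^2=e$ and $ec=1$ then $e=e(ec)=e^2c=ec=1$, and dually if $ce=1$ then $e=1$. Suppose first that every right singular element is a product of idempotents, and write $a=e_1e_2\cdots e_k$ with each $e_i$ idempotent (if the empty product is allowed, the case $a=1$ is immediate, since then $b=ab=1$). From $ab=1$ we get $e_1(e_2\cdots e_k b)=1$, so $e_1$ is right invertible, hence $e_1=1$; then $a=e_2\cdots e_k$ and the same argument applies again. Peeling off the factors one at a time yields $e_1=\cdots=e_k=1$, so $a=1$, whence $b=ab=1$ and $ba=1$, a contradiction. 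If instead every left singular element is a product of idempotents, write $b=f_1\cdots f_m$ with each $f_j$ idempotent; from $ab=1$ we get $(af_1\cdots f_{m-1})f_m=1$, so $f_m$ is left invertible, hence $f_m=1$, and peeling off factors from the right gives $b=1$, again contradicting $ba\neq 1$.

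There is no real obstacle here: the argument is entirely elementary, and the only thing to watch is that after removing the leading (resp.\ trailing) idempotent factor the truncated product still satisfies the same one-sided invertibility relation with $b$, which is exactly what makes the induction go through.
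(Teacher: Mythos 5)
Your proof is correct and rests on the same mechanism as the paper's: given $ab=1$ with $ba\neq 1$, the element $1-ba$ is a nonzero one-sided annihilator making $a$ (resp.\ $b$) singular, and a one-sidedly invertible product of idempotents must reduce to $1$. The only difference is presentational: the paper argues directly that $l.ann(a)=0$ yet a nontrivial product of idempotents forces $l.ann(a)\neq 0$, whereas your ``peeling'' of factors $e_i=1$ makes explicit the step the paper leaves implicit, and you also spell out the symmetric left-singular case.
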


\begin{proof}
Let $a,b\in R$ be such that $ab=1$. Then $l.ann(a)=0$. If $r.ann(a)=0$ then $%
a(ba-1)=0$ implies $ba=1,$ and we are done. In case $r.ann(a)\neq 0$ then by
hypothesis $a$ is a product of idempotents. This implies that $l.ann(a)\neq
0 $, a contradiction. Therefore, $r.ann(a)=0$ and so as above $ba=1.$
\end{proof}

The following lemma is well-known (cf. \cite{K}, Theorem 7.1).

\begin{lemma}
\label{relations between definitons}

If $R$ is right (left) Hermite domain then each right (left) unimodular row
is completable. 
\end{lemma}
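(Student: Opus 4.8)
The plan is to establish the right-handed version; the left-handed one follows by symmetry (passing to the opposite ring, under which left Hermite domains become right Hermite domains and left unimodular columns become right unimodular rows). Fix a right unimodular row $v=(a_1,\dots,a_n)$, so $\sum_{i=1}^{n}a_iR=R$. Recall from the definition above that it suffices to realize $v$ as the bottom row of some matrix in $GL_n(R)$: since permutation matrices are invertible, ``row of an invertible matrix'' and ``bottom row of an invertible matrix'' coincide.

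First I would prove, by induction on $n$, that over \emph{any} right Hermite ring and for \emph{any} $v=(a_1,\dots,a_n)\in R^n$ there exist $P\in GL_n(R)$ and $d\in R$ with $vP=(d,0,\dots,0)$. The case $n=1$ is vacuous (take $P=I_1$). For $n\ge 2$, the right Hermite property applied to the pair $(a_{n-1},a_n)$ produces $Q\in GL_2(R)$ and $d'\in R$ with $(a_{n-1},a_n)Q=(d',0)$; letting $\widetilde Q=\operatorname{diag}(I_{n-2},Q)\in GL_n(R)$ act on the last two coordinates gives $v\widetilde Q=(a_1,\dots,a_{n-2},d',0)$. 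Applying the inductive hypothesis to $(a_1,\dots,a_{n-2},d')\in R^{n-1}$ yields $P'\in GL_{n-1}(R)$ and $d\in R$ with $(a_1,\dots,a_{n-2},d')P'=(d,0,\dots,0)$, and then $P=\widetilde Q\cdot\operatorname{diag}(P',1)$ works.

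Now I would use that $v$ is unimodular and that $R$ is a domain. Since $P$ is invertible, $(d,0,\dots,0)=vP$ is again right unimodular, hence $dR=R$, say $dx=1$. In a domain $ab=1$ forces $ba=1$ (from $a(ba-1)=0$ and $a\neq 0$), so $d\in U(R)$. Consider
\[
C=\begin{pmatrix} 0 & I_{n-1}\\ d & 0\end{pmatrix}\in M_n(R),
\]
where the top-left zero block is $(n-1)\times 1$ and the bottom-right one is $1\times(n-1)$. One checks directly that $C$ is invertible — its inverse has $d^{-1}$ in the $(1,n)$ entry, $I_{n-1}$ in the lower-left block, and zeros elsewhere — and the bottom row of $C$ is exactly $(d,0,\dots,0)$. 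Therefore $CP^{-1}\in GL_n(R)$ has bottom row $(d,0,\dots,0)P^{-1}=vPP^{-1}=v$, which shows $v$ is completable.

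I expect no serious obstacle: the argument is essentially bookkeeping. The two points needing care are (i) that the reduction step must be iterated on consecutive coordinates, with the $2\times 2$ invertible matrices embedded block-diagonally into $GL_n(R)$, so that the successive row operations compose correctly; and (ii) that it is the domain hypothesis — via Dedekind finiteness — that upgrades the leading entry $d$ from merely right-invertible to a genuine unit, which is precisely what makes the completing matrix $C$ invertible. The full strength of the Hermite condition is never needed beyond its iterated use on pairs.
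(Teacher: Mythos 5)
Your proof is correct. Note that the paper itself gives no argument for this lemma at all --- it simply records it as well-known and cites Kaplansky (Theorem 7.1) --- so there is no in-paper proof to compare against; what you have written is a complete, self-contained replacement for that citation. Your two-stage structure is the standard one and is sound: (1) the purely formal induction showing that over any right Hermite ring a row can be compressed to $(d,0,\dots,0)$ by block-diagonally embedded $2\times 2$ invertibles acting on consecutive pairs, and (2) the observation that unimodularity of $v$ transfers through $P$ to make $d$ right invertible, after which the domain hypothesis (domains are Dedekind finite, via $d(xd-1)=0$) upgrades $d$ to a unit, so that your matrix $C$ is genuinely invertible and $CP^{-1}$ completes $v$ as its bottom row. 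You are also right to flag where the domain hypothesis enters: for a general right Hermite ring the leading entry $d$ need only be right invertible, and the completion step would fail. The opposite-ring reduction for the left-handed statement is routine and fine. The argument is exactly what the application in Lemma \ref{bottom rows zero copy(1)} needs.
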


\begin{lemma}
\label{bottom rows zero copy(1)} Let $A\in M_{n}(R)$ be a square matrix with
coefficients in a right B\'{e}zout domain $R$. Let $0\ne u\in R^{n}$ be such
that $uA=0$. Then there exists an invertible matrix $P\in GL_{n}(R)$ such
that $PAP^{-1}$has its last row equal to zero.
\end{lemma}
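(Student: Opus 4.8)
The plan is to pull a common left factor out of the row $u$ so as to land on a unimodular row, and then complete that row to an invertible matrix. Write $u=(u_1,\dots ,u_n)$ and consider the finitely generated right ideal $u_1R+\cdots +u_nR$. Since $R$ is right B\'ezout this ideal is principal, say $u_1R+\cdots +u_nR=dR$, and $d\neq 0$ because $u\neq 0$. Each $u_i$ lies in $dR$, so $u_i=dw_i$ for some $w_i\in R$; setting $w=(w_1,\dots ,w_n)$ we have $u=dw$ coordinatewise. I would then observe that $w$ is a right unimodular row: from $dR=\sum_i u_iR=\sum_i dw_iR=d\big(\sum_i w_iR\big)$ and the fact that left multiplication by the nonzero element $d$ is injective in the domain $R$, it follows that $\sum_i w_iR=R$.

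Next I would transfer the left-annihilation from $u$ to $w$. Since $d(wA)=(dw)A=uA=0$ and $R$ is a domain with $d\neq 0$, each entry of the row $wA$ must vanish, i.e. $wA=0$.

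Now comes the one substantive input. A right B\'ezout domain is a right Hermite domain (this is Amitsur's theorem, reproved here as Theorem \ref{Hermite is Bezout}), so Lemma \ref{relations between definitons} applies to the unimodular row $w$: it is completable, meaning there is an invertible matrix $P\in GL_n(R)$ whose last row is $w$. For this $P$ the last row of $PAP^{-1}$ equals $wAP^{-1}=0\cdot P^{-1}=0$, which is exactly the assertion.

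The only points that require any care are the two cancellations of the left factor $d$ (to see that $w$ is unimodular and that $wA=0$), and both are immediate from $R$ being a domain; there is no real obstacle beyond invoking the completability lemma together with the B\'ezout $\Leftrightarrow$ Hermite equivalence. If one prefers to avoid citing Amitsur's theorem, one can instead note directly that over a right B\'ezout domain a unimodular row of length $n$ can be reduced by right multiplication by an invertible matrix to $(1,0,\dots ,0)$ (an induction using the B\'ezout property on successive pairs of entries), which yields the completion of $w$ by hand.
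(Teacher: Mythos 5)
Your proof is correct and follows essentially the same route as the paper's: reduce to a unimodular row, complete it to an invertible matrix via the Hermite property, and conjugate. The only difference is that you spell out the factorization $u=dw$ and the two cancellations of $d$, which the paper compresses into ``we may assume $u$ is right unimodular.''
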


\begin{proof}
By hypothesis, for some $u\in R^{n}$ $uA=0$. Since $R$ is a right B\'{e}zout
domain we may assume that the vector $u$ is right unimodular. Since right B%
\'{e}zout domain are right Hermite we know that there exists an invertible
matrix $P$ such that the last row of $P$ is the vector $u$. Of course, this
implies that the last row of $PA$ is the zero row and this is true as well
for the last row of $PAP^{-1}$.
\end{proof}

%

Next, we list some properties and results for rings with stable range $1$
which will be referred to in the proofs. Let us first mention a well-known
theorem by Vaserstein which shows that the notion of stable range is
left-right symmetric.

\begin{lemma}
\label{a+bx=du} Let $a,a^{\prime },b,b^{\prime },x,d\in R$ and $u\in U(R)$
be such that $a+bx=du$, $a=da^{\prime }$ and $b=db^{\prime }$. Then

\begin{enumerate}
\item[(a)] $%
\begin{pmatrix}
a & b \\ 
0 & 0%
\end{pmatrix}%
=E%
\begin{pmatrix}
1 & 0 \\ 
-x & 1%
\end{pmatrix}%
$, where $E$ is a product of idempotent matrices,

\item[(b)] There exists an invertible matrix $P\in M_{2}(R)$ such that 
\begin{equation*}
\begin{pmatrix}
a & b \\ 
-x & 1%
\end{pmatrix}%
P=%
\begin{pmatrix}
d & 0 \\ 
0 & 1%
\end{pmatrix}%
.
\end{equation*}
\end{enumerate}
\end{lemma}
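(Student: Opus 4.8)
The hypotheses give $a+bx=du$ with $u\in U(R)$ and $a=da'$, $b=db'$. The plan for part (b) is to exhibit an explicit invertible $2\times 2$ matrix $P$ performing the required column operations. Starting from $\begin{pmatrix} a & b \\ -x & 1\end{pmatrix}$, I would first apply the column operation "column $1$ $\mapsto$ column $1$ $+$ column $2 \cdot x$", i.e.\ multiply on the right by $\begin{pmatrix} 1 & 0 \\ x & 1\end{pmatrix}$, which turns the matrix into $\begin{pmatrix} a+bx & b \\ 0 & 1\end{pmatrix}=\begin{pmatrix} du & b \\ 0 & 1\end{pmatrix}$. Next, using the bottom-right $1$, clear the entry $b$ in the top-right position by the operation "column $2$ $\mapsto$ column $2 - $ column $1\cdot(\text{something})$"; since $b=db'$ and $du\cdot u^{-1}b' = db'=b$, multiplying on the right by $\begin{pmatrix} 1 & -u^{-1}b' \\ 0 & 1\end{pmatrix}$ kills the $(1,2)$ entry and leaves $\begin{pmatrix} du & 0 \\ 0 & 1\end{pmatrix}$. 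Finally, absorb the unit $u$ by multiplying on the right by the invertible diagonal matrix $\mathrm{diag}(u^{-1},1)$, giving $\begin{pmatrix} d & 0 \\ 0 & 1\end{pmatrix}$. Thus $P$ is the product of these three invertible matrices, and it is itself invertible.

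For part (a), I would read off the factorization from part (b). Write $Q=P^{-1}$, so that $\begin{pmatrix} a & b \\ -x & 1\end{pmatrix}=\begin{pmatrix} d & 0 \\ 0 & 1\end{pmatrix}Q$. I would observe that the first column operation used above was exactly right-multiplication by $\begin{pmatrix} 1 & 0 \\ x & 1\end{pmatrix}$, whose inverse is $\begin{pmatrix} 1 & 0 \\ -x & 1\end{pmatrix}$; hence
\[
\begin{pmatrix} a & b \\ 0 & 0\end{pmatrix}
=\begin{pmatrix} a & b \\ -x & 1\end{pmatrix}\begin{pmatrix} 1 & 0 \\ -x & 1\end{pmatrix}^{-1}\cdot(\text{correction})
\]
is not quite right, so instead I would argue directly: the matrix $\begin{pmatrix} a & b \\ 0 & 0\end{pmatrix}$ equals $\begin{pmatrix} a & b \\ -x & 1\end{pmatrix}\begin{pmatrix} 1 & 0 \\ -x & 1\end{pmatrix}$ only if the bottom row vanishes, which it need not; rather, the correct identity to use is that $\begin{pmatrix} a & b \\ 0 & 0\end{pmatrix}$ differs from $\begin{pmatrix} a+bx & b \\ -x+x & 1\end{pmatrix}$-type expressions by the elementary factor $\begin{pmatrix}1 & 0 \\ -x & 1\end{pmatrix}$. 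Concretely: set $E:=\begin{pmatrix} a & b \\ 0 & 0\end{pmatrix}\begin{pmatrix} 1 & 0 \\ x & 1\end{pmatrix}=\begin{pmatrix} a+bx & b \\ 0 & 0\end{pmatrix}=\begin{pmatrix} du & b \\ 0 & 0\end{pmatrix}$. Then $\begin{pmatrix} a & b \\ 0 & 0\end{pmatrix}=E\begin{pmatrix} 1 & 0 \\ -x & 1\end{pmatrix}$, so it remains to show $E=\begin{pmatrix} du & b \\ 0 & 0\end{pmatrix}$ is a product of idempotent matrices. Since the $(1,1)$ entry $du$ is a left divisor of the $(1,2)$ entry $b=du\cdot(u^{-1}b')$ — indeed $du\cdot u^{-1}b'=db'=b$ — the matrix $E$ has the form $\begin{pmatrix} a_1 & a_1 c \\ 0 & 0\end{pmatrix}$ with $a_1=du$, $c=u^{-1}b'$, which is exactly case (b) of Lemma \ref{table of factorizations 2 by 2}, hence a product of four idempotent matrices.

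The only real subtlety is bookkeeping: making sure the divisibility relations $a=da'$, $b=db'$ are used in the right places (the relation $a=da'$ is used implicitly via $a+bx=du$, while $b=db'$ is what makes $du$ a left divisor of $b$ after the first column operation), and keeping track of which matrices are being multiplied on the left versus the right. I expect no genuine obstacle beyond verifying these small identities and confirming that the matrix $E$ produced is literally in the form required by Lemma \ref{table of factorizations 2 by 2}(b); the rest is a direct substitution.
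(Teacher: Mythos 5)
Your argument is correct and essentially identical to the paper's: your three column-operation matrices multiply out to exactly the paper's explicit $P=\begin{pmatrix} u^{-1} & -u^{-1}b' \\ xu^{-1} & 1-xu^{-1}b'\end{pmatrix}$, and your $E=\begin{pmatrix} du & b \\ 0 & 0\end{pmatrix}$ is the same matrix the paper obtains as $\begin{pmatrix} d & 0 \\ 0 & 0\end{pmatrix}\begin{pmatrix} u & b' \\ 0 & 0\end{pmatrix}$. The only (cosmetic) difference is that you certify $E$ directly via entry (b) of the factorization table, using $du\cdot u^{-1}b'=b$, whereas the paper splits $E$ into two factors handled by entries (a) and (e); both are valid.
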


\begin{proof}
\noindent (a) Indeed we have: $%
\begin{pmatrix}
a & b \\ 
0 & 0%
\end{pmatrix}%
=%
\begin{pmatrix}
d & 0 \\ 
0 & 0%
\end{pmatrix}%
\begin{pmatrix}
u & b^{\prime } \\ 
0 & 0%
\end{pmatrix}%
\begin{pmatrix}
1 & 0 \\ 
-x & 1%
\end{pmatrix}%
$ and the first two matrices on the right side are products of idempotent
matrices as shown in the table of factorizations given in Lemma \ref{table
of factorizations 2 by 2}.

\noindent (b) The matrix $P$ is given by $P=%
\begin{pmatrix}
u^{-1} & -u^{-1}b^{\prime } \\ 
xu^{-1} & 1-xu^{-1}b^{\prime }%
\end{pmatrix}%
$.
\end{proof}

Rings with stable range $1$ possess many properties. The next lemma mentions
two of them that are particularly relevant to our study.

\begin{lemma}
\label{consequences of stable range$1$} Let $R$ be a ring with stable range $%
1$. Then

\begin{enumerate}
\item[(a)] $R$ is $GE_{2}$, and

\item[(b)] any unimodular row $(a,b)$ is completable.
\end{enumerate}
\end{lemma}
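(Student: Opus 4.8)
The plan is to establish each part by reducing to the defining property of stable range $1$. For part (a), let $\begin{pmatrix} a & b \\ c & d \end{pmatrix}$ be an invertible matrix over $R$; I want to reduce it to a diagonal matrix with invertible entries by multiplying on the left and right by elementary matrices. The key observation is that invertibility forces the row $(a,b)$ to be (right) unimodular: since some left multiple of the matrix is the identity, there are $u,v$ with $ua + vc = 1$ is not quite it, but in any case the entries of an invertible matrix generate $R$, and in particular one can arrange that the first row is unimodular after a preliminary elementary move (if $a$ and $b$ together do not generate $R$, add a suitable multiple of the second row). Once $aR + bR = R$, stable range $1$ gives an $x$ with $a + bx \in U(R)$; the elementary column operation replacing the first column by (first column) $+ x\cdot$(second column) turns the $(1,1)$ entry into a unit $a' = a+bx$. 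Then standard elementary row and column operations clear the rest of the first row and first column, leaving $\begin{pmatrix} a' & 0 \\ 0 & d' \end{pmatrix}$, and since the original matrix and all the elementary factors are invertible, $d'$ must be a unit as well. This exhibits the original matrix as a product of elementary matrices and this one diagonal matrix with invertible diagonal entries, so $R$ is $GE_2$.

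For part (b), let $(a,b)$ be a unimodular row, so $aR + bR = R$. By stable range $1$ there is $x \in R$ with $a + bx \in U(R)$; write $w = a + bx$. Then the matrix
\begin{equation*}
\begin{pmatrix} a & b \\ -w^{-1}(\text{something}) & \ast \end{pmatrix}
\end{equation*}
should be invertible — more cleanly, I would note that $\begin{pmatrix} a & b \\ c & d\end{pmatrix}$ with $c = -x$ and a suitable $d$ works: indeed Lemma \ref{a+bx=du}(b) already produces, from the data $a + bx = w \cdot 1$ (taking $d = w^{-1}$ in that lemma's notation, or rather applying it with the roles adjusted), an explicit invertible $P$ with $\begin{pmatrix} a & b \\ -x & 1 \end{pmatrix} P = \begin{pmatrix} w & 0 \\ 0 & 1\end{pmatrix}$. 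Hence $\begin{pmatrix} a & b \\ -x & 1 \end{pmatrix} = \begin{pmatrix} w & 0 \\ 0 & 1 \end{pmatrix} P^{-1}$ is invertible (product of invertibles), and its top row is $(a,b)$. Therefore $(a,b)$ is completable.

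The main obstacle, such as it is, is purely bookkeeping: in part (a) one must be careful that after the column operation $a + bx$ is genuinely a unit and that clearing the first row and column is legitimate (left-multiply by $\mathrm{diag}(a'^{-1},1)$ if one wants to normalize, but that is itself a diagonal-with-unit-entries factor, so it is fine to allow it). One should also double-check the preliminary step guaranteeing unimodularity of the first row: if $a$ and $b$ do not generate $R$ on the right, then since the matrix is invertible the $(2,1),(2,2)$ entries supply the rest, and an elementary row operation adding a right multiple of row $2$ to row $1$ repairs this without leaving the allowed class of matrices. With these points dispatched, both parts follow directly, and part (b) is essentially immediate once Lemma \ref{a+bx=du}(b) is invoked with the stable-range witness $x$.
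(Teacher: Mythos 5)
Your proposal is correct and follows essentially the same route as the paper: in (a) you use stable range $1$ to turn the $(1,1)$ entry into a unit via a column operation and then clear the first row and column by elementary operations, leaving a diagonal matrix with invertible entries (the paper packages the same computation as an explicit factorization), and in (b) you invoke Lemma \ref{a+bx=du}(b) with the stable-range witness $x$ to exhibit $\left(\begin{smallmatrix} a & b \\ -x & 1\end{smallmatrix}\right)$ as invertible, exactly as the paper does. The only cosmetic point is that your preliminary worry in (a) is unnecessary: the first row of an invertible matrix is automatically right unimodular, since $a(A^{-1})_{11}+b(A^{-1})_{21}=1$.
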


\begin{proof}

\noindent (a) Let $A=%
\begin{pmatrix}
a & b \\ 
c & d%
\end{pmatrix}
$ be an invertible matrix with coefficients in $R$. We thus have, in
particular, that $aR + bR=R$ and the stable range $1$ hypothesis shows that
there exists $x\in R$ such that $a+bx=u\in U(R)$. Let us put $%
v:=d-(c+dx)u^{-1}b$. We then have 
\begin{equation*}
A=%
\begin{pmatrix}
b & u \\ 
d & c+dx%
\end{pmatrix}%
\begin{pmatrix}
-x & 1 \\ 
1 & 0%
\end{pmatrix}
= 
\begin{pmatrix}
u & 0 \\ 
c+dx & v%
\end{pmatrix}%
\begin{pmatrix}
u^{-1}b & 1 \\ 
1 & 0%
\end{pmatrix}%
\begin{pmatrix}
-x & 1 \\ 
1 & 0%
\end{pmatrix}%
.
\end{equation*}
Since $A$ is invertible, $v$ is a unit. This finally gives us 
\begin{equation*}
A=%
\begin{pmatrix}
u & 0 \\ 
0 & v%
\end{pmatrix}%
\begin{pmatrix}
1 & 0 \\ 
v^{-1}(c+dx) & 1%
\end{pmatrix}%
\begin{pmatrix}
u^{-1}b & 1 \\ 
1 & 0%
\end{pmatrix}%
\begin{pmatrix}
-x & 1 \\ 
1 & 0%
\end{pmatrix}%
,
\end{equation*}
as required.

\noindent (b) If $aR+bR=R$ then there exists $x\in R$ such that $a+bx=u\in
U(R)$. In this case Lemma \ref{a+bx=du} shows that the unimodular row $(a,b)$
is completable.
\end{proof}

\section{Local Rings}

Firstly, as a consequence of our table of factorizations in Lemma \ref{table
of factorizations 2 by 2}, we give a very simple proof of the celebrated
theorem that every singular matrix over a division ring is a product of
idempotent matrices. The proof given below is for a singular $2\times 2$
matrix over a division ring. However, as a consequence of Theorem \ref%
{Elementary Hermite and IP_2 implies IP}, the proposition holds for any $%
n\times n$ singular matrix.

\begin{proposition}
Every $2\times 2$ singular matrix over a division ring can be factorized as
a product of idempotent matrices.
\end{proposition}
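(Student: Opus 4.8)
The plan is to reduce an arbitrary $2\times 2$ singular matrix over a division ring $D$ to one of the basic shapes appearing in the table of factorizations of Lemma \ref{table of factorizations 2 by 2}. Since $A$ is singular and $D$ is a division ring, $A$ has a nonzero right annihilator, so there is a nonzero column vector $w$ with $Aw=0$; equivalently the two rows of $A$ are right-linearly dependent, and also the columns are left-linearly dependent. I would first dispose of the trivial case $A=0$ (which is idempotent), so assume $A\neq 0$.

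Next I would use the dependence of the rows (or a direct elementary argument) to write $A$ in a reduced form. Concretely, since $A\neq 0$, some entry is a unit; multiplying $A$ on the left and right by permutation matrices $\begin{pmatrix}0&1\\1&0\end{pmatrix}$ (which are idempotent-expressible, or at least whose effect on the factorization can be absorbed by conjugation as noted in the remark preceding Lemma \ref{table of factorizations 2 by 2}), I may assume the $(1,1)$ entry is a unit, say $A=\begin{pmatrix} a & b\\ c & d\end{pmatrix}$ with $a\in U(D)$. Because the second row is a left multiple of the first — here $(c,d)=\lambda(a,b)$ for $\lambda=ca^{-1}$ — I can left-multiply by the elementary matrix $\begin{pmatrix}1&0\\-\lambda&1\end{pmatrix}$ to clear the bottom row, obtaining $\begin{pmatrix}1&0\\-\lambda&1\end{pmatrix}A=\begin{pmatrix}a&b\\0&0\end{pmatrix}$. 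Thus $A=\begin{pmatrix}1&0\\\lambda&1\end{pmatrix}\begin{pmatrix}a&b\\0&0\end{pmatrix}$.

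Now I would invoke part (e) of Lemma \ref{table of factorizations 2 by 2}: since $a\in U(D)$, the matrix $\begin{pmatrix}a&b\\0&0\end{pmatrix}=\begin{pmatrix}a&a(a^{-1}b)\\0&0\end{pmatrix}$ factors as in (b), hence is a product of idempotent matrices. It remains to absorb the leading factor $\begin{pmatrix}1&0\\\lambda&1\end{pmatrix}$. This is the one genuinely non-automatic point: an elementary matrix is invertible, not idempotent, but the table is built precisely so that the trailing elementary factors in (b) and the leading one here cancel or recombine. The cleanest route is to observe that $\begin{pmatrix}1&0\\\lambda&1\end{pmatrix}\begin{pmatrix}a&b\\0&0\end{pmatrix}$ is itself of the form $\begin{pmatrix}a&b\\\lambda a&\lambda b\end{pmatrix}=\begin{pmatrix}a & ac\\ \lambda a & \lambda a c\end{pmatrix}$ with $c=a^{-1}b$ — i.e. a rank-one matrix whose rows are left-proportional and whose columns are right-proportional — and then appeal to a row-and-column version of cases (b)/(b$'$) (or combine (b$'$) and (b)) to write such a matrix directly as a product of idempotents. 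Equivalently, one conjugates: $\begin{pmatrix}1&0\\\lambda&1\end{pmatrix}$ acts by conjugation on the idempotent factors of $\begin{pmatrix}a&b\\0&0\end{pmatrix}$ once one writes $A = \begin{pmatrix}1&0\\\lambda&1\end{pmatrix}\begin{pmatrix}a&b\\0&0\end{pmatrix}\begin{pmatrix}1&0\\-\lambda&1\end{pmatrix}\begin{pmatrix}1&0\\\lambda&1\end{pmatrix}$ and absorbs the pair on the right into the rightmost idempotent factor from (b), provided that idempotent is chosen with the right annihilator.

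So the key steps, in order, are: (1) handle $A=0$; (2) permute so that a unit sits in the $(1,1)$ position; (3) use left-proportionality of the rows and an elementary operation to reach $\begin{pmatrix}a&b\\0&0\end{pmatrix}$ with $a$ a unit; (4) apply Lemma \ref{table of factorizations 2 by 2}(e)$\to$(b) to factor that matrix into idempotents; (5) reabsorb the elementary/permutation matrices using conjugation or the row-and-column analogues (b$'$) in the table. The main obstacle is step (5): making sure the non-idempotent conjugating factors really can be pushed into the idempotent factors rather than left dangling, which is exactly why the table in Lemma \ref{table of factorizations 2 by 2} includes both the $\begin{pmatrix}*&*\\0&0\end{pmatrix}$ and the $\begin{pmatrix}*&0\\ *&0\end{pmatrix}$ shapes together with their trailing elementary pieces.
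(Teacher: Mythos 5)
Your endpoint is right, and the ``cleanest route'' you sketch near the end is in fact exactly the paper's proof: from the linear dependence of the columns one writes $A=\begin{pmatrix}a&0\\b&0\end{pmatrix}\begin{pmatrix}1&\alpha\\0&0\end{pmatrix}$ directly, the second factor is already idempotent, and the first is handled by Lemma \ref{table of factorizations 2 by 2} -- case (a) when $b=0$, case (d) reducing to (c$'$) when $b\neq 0$. The detour through the elementary matrix $\begin{pmatrix}1&0\\\lambda&1\end{pmatrix}$, and the ensuing worry about absorbing it, is unnecessary: once you observe that $\begin{pmatrix}a&b\\\lambda a&\lambda b\end{pmatrix}=\begin{pmatrix}a&0\\\lambda a&0\end{pmatrix}\begin{pmatrix}1&a^{-1}b\\0&0\end{pmatrix}$ and that the first factor has the shape (b$'$), you never needed to peel the elementary factor off in the first place.

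Two of your intermediate steps do not hold up as written. First, the permutation step: conjugating by $\begin{pmatrix}0&1\\1&0\end{pmatrix}$ sends the $(1,2)$ entry to the $(2,1)$ entry, not to $(1,1)$, so for a matrix such as $\begin{pmatrix}0&1\\0&0\end{pmatrix}$ no conjugation by swaps puts a unit in the $(1,1)$ position; and a one-sided row or column swap cannot be ``absorbed'' at all, since a nonidentity invertible matrix is never a product of idempotents (if $E_1\cdots E_k$ is invertible, each $E_i$ has a one-sided inverse and idempotency then forces $E_i=I$). The matrices that escape your step (2) -- those with a single nonzero row or column off the diagonal -- are covered directly by entries (a$'$) and (d) of the table, but they must be treated, not permuted away. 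Second, the absorption argument in your step (5) is only a hope: conjugating the idempotents by $U=\begin{pmatrix}1&0\\\lambda&1\end{pmatrix}$ still leaves a dangling factor $U$ on one end, and the last idempotent $\begin{pmatrix}1&c\\0&0\end{pmatrix}$ from (b) does not satisfy $E_k U=E_k$ in general. Both difficulties evaporate if you adopt the column-dependence factorization above from the start.
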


\begin{proof}
Let $A=\left( 
\begin{array}{cc}
a & c \\ 
b & d%
\end{array}%
\right) $ be a singular matrix. Then the columns $\left( 
\begin{array}{c}
a \\ 
b%
\end{array}%
\right) $ and $\left( 
\begin{array}{c}
c \\ 
d%
\end{array}%
\right) $ are linearly dependent. Suppose $\left( 
\begin{array}{c}
c \\ 
d%
\end{array}%
\right) =\left( 
\begin{array}{c}
a \\ 
b%
\end{array}%
\right) \alpha .$

Then $\left( 
\begin{array}{cc}
a & a\alpha \\ 
b & b\alpha%
\end{array}%
\right) =\left( 
\begin{array}{cc}
a & 0 \\ 
b & 0%
\end{array}%
\right) \left( 
\begin{array}{cc}
1 & \alpha \\ 
0 & 0%
\end{array}%
\right) $. If $b=0$, then Lemma 1 gives factorization of the first factor
whereas the second factor is already an idempotent. If $b\neq 0,$ then one
can use Lemma \ref{table of factorizations 2 by 2} (d) to conclude the
result.
\end{proof}

\bigskip

Next, we show that if each right (resp. each left) singular matrix over a local ring $R$ is a
product of idempotent matrices then the ring $R$ must be a domain. Let us
recall that a local ring is projective-free. For an idempotent matrix $E\in
M_{n}(R)$, $n>1$, where $R$ is projective-free, there exist matrices $A\in
M_{n\times r}(R)$ and $B\in M_{r\times n}(R)$ with $r<n$ such that $E=AB $
and $BA=I_{r}$ (See Cohn \cite{C}, Proposition 0.4.7, p. 24).

\begin{theorem}
\label{Local ring with IP are domains} Let $R$ be a local ring such that
each right (resp. each left) singular $2\times 2$ matrix over $R$ can be expressed as a product of
idempotents. Then $R$ is a domain.
\end{theorem}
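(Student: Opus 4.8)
The plan is to argue by contradiction: suppose $R$ is a local ring, not a domain, in which every right singular $2\times 2$ matrix is a product of idempotents, and derive a contradiction. Since $R$ is not a domain, pick $0\neq a,b\in R$ with $ab=0$. Then the matrix $A=\begin{pmatrix} a & 0\\ 0 & 0\end{pmatrix}$ (or more usefully $\begin{pmatrix} a & 0\\ b & 0\end{pmatrix}$ type matrices) is right singular, since the column $\binom{b}{0}$, or rather a suitable vector, is killed on the right; in fact any matrix with a zero column is right singular. So by hypothesis such $A$ factors as a product $E_1\cdots E_k$ of idempotent $2\times 2$ matrices over $R$.

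The key structural input is that $R$, being local, is projective-free, so each nontrivial idempotent $E_i\in M_2(R)$ has rank exactly $1$ and can be written $E_i=C_iD_i$ with $C_i\in M_{2\times 1}(R)$, $D_i\in M_{1\times 2}(R)$, and $D_iC_i=1\in R$ (using Cohn's Proposition 0.4.7 as cited). Multiplying these rank-one factorizations together, a product of $k$ such idempotents collapses to $C_1(D_1C_2)(D_2C_3)\cdots(D_{k-1}C_k)D_k$, which has the form $C\,r\,D$ for a single column $C$, a single row $D$, and a scalar $r\in R$ equal to the product $D_1C_2\cdot D_2C_3\cdots D_{k-1}C_k$. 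The upshot is that $A$ itself must be expressible as $C\cdot r\cdot D$ with $DC$ invertible (in fact after absorbing we can arrange $DC=1$ or at least $DC$ a unit), i.e. $A$ is "unit-rank-one" in a strong sense. Then I would compute $l.ann(A)$: since $A=CrD$ with $D$ a left-invertible row and $C$ a right-invertible column over the local ring, the left annihilator of $A$ consists of rows $w$ with $wC\cdot rD=0$, and using that $D$ has a right inverse this forces $wCr=0$. The point is to show this annihilator is "large" in a way incompatible with $A$ being, say, the matrix $\begin{pmatrix} a & 0\\ 0 & 0\end{pmatrix}$ with $a\neq 0$: its left annihilator is $\{(w_1,w_2): w_1 a=0\}$, and because $R$ is local the computation of ranks/annihilators of $CrD$ pins down $ab=0$-type relations and contradicts $a\neq 0$ combined with the idempotent structure — concretely, one shows the factorization forces $a$ to be a unit times an idempotent-coming-from relation, hence (locality!) forces $a$ invertible or $a\in J(R)$ with consequences that contradict $\cap$-type or annihilator conditions.

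More concretely, here is the cleaner route I would actually take. Reduce to showing: if $A=\begin{pmatrix} a & 0\\ 0 & 0\end{pmatrix}$ with $a\neq 0$ is a product of idempotents, then $a$ is left invertible (hence, $R$ being Dedekind finite by Lemma \ref{I.P. property implies Dedekind finite}, invertible); applying this instead to $A=\begin{pmatrix} ab & 0\\ 0 & 0\end{pmatrix}$ — wait, that is $0$ — so apply it to a matrix like $\begin{pmatrix} a & 0\\ 0 & 0\end{pmatrix}$ where $a$ is a nonzero non-unit (which exists: if $ab=0$ with $a,b\neq0$ then $a$ cannot be a unit). The rank-one collapse above writes $A=C r D$. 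Write $C=\binom{c_1}{c_2}$, $D=(d_1,d_2)$ with $d_1c_1+d_2c_2=1$ (after normalizing the scalar into $r$, so really $A=CrD$ and one coordinate comparison gives $c_1 r d_1 = a$, $c_1 r d_2=0$, $c_2 r d_1=0$, $c_2 r d_2 = 0$). Since $R$ is local, of the two elements $d_1c_1, d_2c_2$ summing to $1$ at least one is a unit; chase cases using the vanishing relations $c_2rd_2=0$, $c_1rd_2=0$, $c_2rd_1=0$ to conclude $c_2rd_2=0$ forces (via the unit among $d_ic_i$) that $r$ times something is zero, and ultimately that $a=c_1rd_1$ is a unit — contradicting the choice of $a$ as a nonzero non-unit.

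I expect the main obstacle to be precisely this last case analysis: making the passage from "$A$ is a product of idempotents" to "$A=CrD$ with the right invertibility data" fully rigorous (the telescoping of rank-one factorizations is clean, but one must be careful that trivial idempotents $0$ and $I_2$ are handled, and that the scalar $r$ genuinely captures everything), and then squeezing a contradiction out of the four scalar equations $c_irc_j'$-type relations together with locality. The left-right "resp." version is handled symmetrically by transposing, or by noting the argument only used one-sided annihilator information. One subtlety worth flagging: we must ensure the chosen test matrix is genuinely right singular — $\begin{pmatrix} a & 0\\ 0 & 0\end{pmatrix}$ always is, since $(0,1)$ annihilates... no, that's a left annihilator; for right singular we need a nonzero column vector $v$ with $Av=0$, and $v=\binom{0}{1}$ works, so indeed every matrix with a zero column is right singular and the hypothesis applies.
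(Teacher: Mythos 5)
Your overall skeleton---using projective-freeness of the local ring to write each nontrivial idempotent factor as $E_i=C_iD_i$ with $D_iC_i=1$ and telescoping the product to $A=C_1rD_k$, then invoking Dedekind finiteness via Lemma \ref{I.P. property implies Dedekind finite}---is exactly the mechanism of the paper's proof. But your argument fails at the very first step: the test matrix is wrong. The matrix $\begin{pmatrix} a & 0\\ 0 & 0\end{pmatrix}$ is a product of idempotents for \emph{every} $a$ in \emph{every} ring; Lemma \ref{table of factorizations 2 by 2}(a) gives the explicit factorization
$\begin{pmatrix} a & 0\\ 0 & 0\end{pmatrix}=\begin{pmatrix} 1 & a\\ 0 & 0\end{pmatrix}\begin{pmatrix} 0 & 0\\ 0 & 1\end{pmatrix}\begin{pmatrix} 1 & 0\\ 1 & 0\end{pmatrix}$.
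Hence no contradiction can be extracted from its factorizability, and your proposed reduction (``if $\mathrm{diag}(a,0)$ is a product of idempotents then $a$ is left invertible'') is simply false. Concretely, telescoping that factorization gives $C=\binom{1}{0}$, $D=(1,0)$, $r=a$, so $c_2=d_2=0$: all four of your vanishing relations hold vacuously and $a=c_1rd_1$ is a non-unit, so the locality case analysis has nothing to bite on. A secondary error: you assert $DC=1$ for the outermost column and row, but the telescoping only yields $D_1C_1=1$ and $D_kC_k=1$; the pairing $D_kC_1$ is unconstrained.

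The fix is the paper's choice of test matrix. For $a$ with $r.ann(a)\neq 0$ take $A=\begin{pmatrix} a & 0\\ 0 & 1\end{pmatrix}$, which is right singular, and whose $(2,2)$-entry equal to $1$ supplies the \emph{unit} relation that $\mathrm{diag}(a,0)$ cannot: writing $A=P_1\gamma Q_n$ with $P_1=(\alpha,\beta)^t$, $Q_n=(\delta,\epsilon)$, $P_n=(x,y)^t$ and $Q_nP_n=1$, one reads off $\beta\gamma\epsilon=1$ together with $AP_n=P_1\gamma$, hence $y\epsilon=1$; Dedekind finiteness gives $\epsilon y=1$, so $\delta x=0$, so $ax=\alpha\gamma(\delta x)=0=\alpha\gamma$ and finally $a=\alpha\gamma\delta=0$. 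It is precisely the invertible diagonal entry of the test matrix that provides the leverage your approach lacks.
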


\begin{proof}
We assume that every right singular matrix is a product of idempotents.  Let $a\in R.$ Suppose $r.ann(a)\neq 0.$ Since the matrix 
\begin{equation*}
A=%
\begin{pmatrix}
a & 0 \\ 
0 & 1%
\end{pmatrix}%
\end{equation*}%
is right singular it can be expressed as a product of idempotent matrices, say $%
A=E_{1}\dots E_{n}$. Since $a$ belongs to the Jacobson radical of $R$, it
cannot be itself an idempotent and hence we must have $n>1$. The property of
idempotent matrices recalled in the paragraph preceding this theorem shows
that $A$ can be written as $A=P_{1}Q_{1}\dots P_{n}Q_{n}$ where $Q_i\in
M_{1\times 2}(R),P_i\in M_{2\times 1}(R)$ are such that $Q_iP_i=1$. Set $%
P_{1}=(\alpha ,\beta )^{t}$, $Q_{1}P_{2}Q_{2}\dots P_{n}=\gamma \in R$ and $%
Q_{n}=(\delta ,\epsilon )$. Then $a=\alpha \gamma \delta $, $0=\alpha \gamma
\epsilon $, $0=\beta \gamma \delta $, and $1=\beta \gamma \epsilon $. Let us
set $P_{n}=(x,y)^t$. Since $Q_{n}P_{n}=1$, we obtain $\delta x+\epsilon y=1$%
. Furthermore, 
\begin{equation*}
\begin{pmatrix}
ax \\ 
y%
\end{pmatrix}%
=%
\begin{pmatrix}
a & 0 \\ 
0 & 1%
\end{pmatrix}%
\begin{pmatrix}
x \\ 
y%
\end{pmatrix}%
=%
\begin{pmatrix}
\alpha \\ 
\beta%
\end{pmatrix}%
\gamma 
\begin{pmatrix}
\delta & \epsilon%
\end{pmatrix}%
\begin{pmatrix}
x \\ 
y%
\end{pmatrix}%
=%
\begin{pmatrix}
\alpha \gamma \\ 
\beta \gamma%
\end{pmatrix}%
.
\end{equation*}%
This leads to $ax=\alpha \gamma $ and $y=\beta \gamma $. We then easily get $%
1=\beta \gamma \epsilon =y\epsilon $ and since $R$ is Dedekind finite we
also have $\epsilon y=1$. This leads consecutively to $\delta x=0$, $%
ax=\alpha \gamma \delta x=0$, $\alpha \gamma =ax=0$ and finally $a=\alpha
\gamma \delta =0$, as desired.
\end{proof}

%

The following theorem gives sufficient conditions for singular $2 \times 2$
matrices over local rings to be a product of idempotents.

\begin{theorem}
\label{local ring with principal generator for J satisfies IP2} Let $R$ be a
local domain such that its radical $J(R)=gR$ with $\cap_n $($J(R))^{n}=0$.
Let $S$ be the $2\times 2$ matrix ring over $R$. Then each matrix $A\in S$
with $l.ann(A)\neq 0$ is a product of idempotent matrices.
\end{theorem}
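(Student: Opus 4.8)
The plan is to extract a $g$-adic valuation on $R$ from the two hypotheses, use it to reduce $A$ (up to conjugation) to a matrix with zero second row, and then read off the factorization from the table in Lemma~\ref{table of factorizations 2 by 2}.

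First I would record the structure of $R$. Since $R$ is local, $J:=J(R)=gR$ is a two-sided ideal, so each power $J^{n}$ is two-sided and $g^{n}R\subseteq J^{n}$; hence $\bigcap_{n}g^{n}R\subseteq\bigcap_{n}J^{n}=0$. Thus every nonzero $a\in R$ lies in $g^{n}R\setminus g^{n+1}R$ for a unique integer $n\ge 0$, and writing $a=g^{n}b$ forces $b\notin gR=J$, so $b$ is a unit. Therefore every nonzero element of $R$ has the form $g^{n}u$ with $u\in U(R)$; set $v(a)=n$. It follows at once that for any $a,b\in R$ the principal right ideals $aR$ and $bR$ are comparable: if both are nonzero and, say, $v(a)\le v(b)$, then $b\in g^{v(b)}R\subseteq g^{v(a)}R=aR$, while the cases with $a=0$ or $b=0$ are trivial. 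In particular $R$ is a right B\'ezout domain (indeed a right principal ideal domain).

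Next, since $l.ann(A)\neq 0$ there is a nonzero row $u\in R^{2}$ with $uA=0$; because $R$ is a right B\'ezout domain, Lemma~\ref{bottom rows zero copy(1)} yields $P\in GL_{2}(R)$ such that $PAP^{-1}$ has zero second row. Conjugating each idempotent factor by $P$ shows that $A$ is a product of idempotents if and only if $PAP^{-1}$ is, so I may assume $A=\begin{pmatrix}a & b\\ 0 & 0\end{pmatrix}$. By the comparability established above, one of the two entries is a left multiple of the other. If $b=ac$ for some $c\in R$, then $A=\begin{pmatrix}a & ac\\ 0 & 0\end{pmatrix}$ is a product of idempotents by Lemma~\ref{table of factorizations 2 by 2}(b); if instead $a=bc$ for some $c\in R$, then $A=\begin{pmatrix}bc & b\\ 0 & 0\end{pmatrix}$ is a product of idempotents by Lemma~\ref{table of factorizations 2 by 2}(c). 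Since one of these alternatives always holds, the proof is complete.

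The only step carrying real content is the first one: squeezing the valuation — equivalently, the fact that $R$ is a right B\'ezout domain whose principal right ideals are linearly ordered — out of the hypotheses $J(R)=gR$ and $\bigcap_{n}J(R)^{n}=0$. Once that is in place, the reduction to a matrix with a zero row is exactly Lemma~\ref{bottom rows zero copy(1)}, and the endgame is a one-line appeal to the table of factorizations, where I expect no genuine difficulty. One point to keep in mind is that it is conjugation, not arbitrary one-sided multiplication by invertible matrices, that preserves being a product of idempotents, which is precisely why Lemma~\ref{bottom rows zero copy(1)} is phrased with $PAP^{-1}$.
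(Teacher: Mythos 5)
Your proof is correct and follows essentially the same route as the paper's: both extract from $J(R)=gR$ and $\cap_n J(R)^n=0$ that every nonzero element has the form $g^{n}u$ with $u$ a unit (so principal right ideals are linearly ordered), conjugate $A$ to a matrix with zero bottom row, and finish with Lemma~\ref{table of factorizations 2 by 2}~(b)/(c). The only cosmetic difference is that you obtain the zero bottom row by invoking Lemma~\ref{bottom rows zero copy(1)} after noting $R$ is right B\'ezout, whereas the paper builds the conjugating matrix $\bigl(\begin{smallmatrix}1&0\\ c&1\end{smallmatrix}\bigr)$ explicitly from the relation $x=yc$ in the annihilating row.
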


\begin{proof}
Since $J=gR$ \ with $\cap _{n}$($J(R))^{n}=0,$ we note that for any nonzero
elements $x,y\in R$ there exist positive integers $n,l$ such that $x=g^{n}u$
and $y=g^{l}v$, for some invertible elements $u,v\in U(R)$, where $U(R)$
denotes the set of invertible elements in $R$. If $n\geq l$ we can write $%
x=yc$ with $c:=v^{-1}g^{n-l}u$. Clearly, $c\neq 0$. Since $l.ann(A)\neq 0,$
we can assume that there exists $(x,y)\neq (0,0)$ such that $(x,y)A=(0,0)$.
Furthermore, since $x=yc, \, y\neq 0$ and $R$ is a domain, we have $%
(c,1)A=(0,0)$. This shows that $UA$ has bottom row zero where $U=%
\begin{pmatrix}
1 & 0 \\ 
c & 1%
\end{pmatrix}%
$ and so does the matrix $UAU^{-1}$. Since for every pair $(x,y)\neq (0,0)$,
one of them is a multiple of the other by invoking Lemma \ref{table of
factorizations 2 by 2} (b), we obtain that $A=U^{-1}E_{1}...E_{k}U,$ where $%
E_{i}$ are idempotents and hence $%
A=(U^{-1}E_{1}U)(U^{-1}E_{2}U)...(U^{-1}E_{k}U)$ is a product of idempotents.
\end{proof}

\begin{remark}
\textrm{If the matrix $A$ is such that 
$r.ann(A)\neq 0$ then the same proof will hold if we assume $J=Rg$ and $\cap
_{i\geq 0}J(R)^{^{i}}=0$. }
\end{remark}

\section{Construction of Idempotents and Representation of singular matrices}

The following lemma completes our "table" of Lemma 1 in an interesting way.
The lemma proved below provides a further useful tool while working with
idempotent matrices over a projective-free ring.

\begin{lemma}
\label{idempotent matrices from ab+cd=1} Let $a,b,c,d$ be elements in a ring 
$R.$

\begin{enumerate}
\item[(a)] If $ca+db=1$, then the matrix 
\begin{equation*}
E=%
\begin{pmatrix}
ac & ad \\ 
bc & bd%
\end{pmatrix}%
\end{equation*}%
is an idempotent matrix. If $R$ is a domain and the matrix $E$ is nonzero,
then the converse is also true.

\item[(b)] The matrix $%
\begin{pmatrix}
ab+u & a \\ 
0 & 0%
\end{pmatrix}%
$, $u$ a unit, is a product of idempotent matrices.

\item[(c)] If there exists $x\in R$ such that $a+bx\in U(R)$ then the matrix 
\begin{equation*}
\begin{pmatrix}
a & b \\ 
0 & 0%
\end{pmatrix}%
\end{equation*}%
is a product of idempotent matrices. 
\end{enumerate}
\end{lemma}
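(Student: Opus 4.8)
The plan is to prove the three parts in order, using the table of factorizations (Lemma \ref{table of factorizations 2 by 2}) as the main engine, together with Lemma \ref{a+bx=du}.

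For part (a), I would simply compute $E^2$ directly. Writing $E = \begin{pmatrix} ac & ad \\ bc & bd \end{pmatrix}$, one sees $E = \begin{pmatrix} a \\ b \end{pmatrix}\begin{pmatrix} c & d \end{pmatrix}$, so $E^2 = \begin{pmatrix} a \\ b \end{pmatrix}(ca+db)\begin{pmatrix} c & d \end{pmatrix} = E$ precisely when the scalar $ca+db$ acts as $1$ on the relevant products; the hypothesis $ca+db=1$ gives this at once. For the converse, suppose $R$ is a domain and $E\ne 0$ is idempotent. Factor $E = \begin{pmatrix} a \\ b \end{pmatrix}\begin{pmatrix} c & d \end{pmatrix}$ (the entries of $E$ force such a rank-one expression; since $R$ is a domain one can extract a common left factor of the rows and a common right factor of the columns). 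From $E^2 = E$ and $E\ne 0$ we get $\begin{pmatrix} a \\ b \end{pmatrix}(ca+db)\begin{pmatrix} c & d \end{pmatrix} = \begin{pmatrix} a \\ b \end{pmatrix}\begin{pmatrix} c & d \end{pmatrix}$; cancelling a nonzero entry on each side (legitimate in a domain) yields $ca+db=1$.

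For part (b), I would exhibit an explicit factorization. Observe that
\begin{equation*}
\begin{pmatrix} ab+u & a \\ 0 & 0 \end{pmatrix}
= \begin{pmatrix} u \cdot (u^{-1}(ab+u)) & u\cdot(u^{-1}a) \\ 0 & 0 \end{pmatrix},
\end{equation*}
and since $u^{-1}(ab+u) = u^{-1}ab + 1 = (u^{-1}a)b + 1$, the matrix is of the form $\begin{pmatrix} u\cdot(ec\,b + 1)\, & \, u\cdot e \\ 0 & 0\end{pmatrix}$ with $e := u^{-1}a$... more cleanly: the matrix equals $\begin{pmatrix} u & 0 \\ 0 & 0\end{pmatrix}\begin{pmatrix} (u^{-1}a)b+1 & u^{-1}a \\ 0 & 0\end{pmatrix}$. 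The first factor is a product of idempotents by Lemma \ref{table of factorizations 2 by 2}(a) (or is trivial to handle), while the second has the shape $\begin{pmatrix} cb+1 & c \\ 0 & 0 \end{pmatrix}$ with $c = u^{-1}a$. Now $\begin{pmatrix} cb+1 & c \\ 0 & 0\end{pmatrix} = \begin{pmatrix} 1 & c \\ 0 & 0 \end{pmatrix}\begin{pmatrix} 1 & 0 \\ b & 0 \end{pmatrix}$, where $\begin{pmatrix} 1 & c \\ 0 & 0\end{pmatrix}$ is an idempotent and $\begin{pmatrix} 1 & 0 \\ b & 0 \end{pmatrix}$ can be factored by the table (it is $\begin{pmatrix} a & 0 \\ b & 0\end{pmatrix}$ with $a=1$ a unit, so Lemma \ref{table of factorizations 2 by 2}(e) applies; alternatively it already equals a product of the idempotents in ($b'$)). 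Assembling these gives the claim.

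For part (c), the hypothesis is exactly the stable-range-type condition appearing in Lemma \ref{a+bx=du}: set $d=a+bx\in U(R)$, so $a = d\,(d^{-1}a)$ and $b = d\,(d^{-1}b)$, and Lemma \ref{a+bx=du}(a) gives $\begin{pmatrix} a & b \\ 0 & 0\end{pmatrix} = E\begin{pmatrix} 1 & 0 \\ -x & 1\end{pmatrix}$ with $E$ a product of idempotents. It remains to absorb the trailing elementary matrix $\begin{pmatrix} 1 & 0 \\ -x & 1\end{pmatrix}$; this is where part (b) enters. One checks that $E\begin{pmatrix} 1 & 0 \\ -x & 1\end{pmatrix}$, unwound through the explicit $E = \begin{pmatrix} d & 0 \\ 0 & 0\end{pmatrix}\begin{pmatrix} u & b' \\ 0 & 0\end{pmatrix}$ of Lemma \ref{a+bx=du}, collapses to a matrix of the form $\begin{pmatrix} \ast & \ast \\ 0 & 0\end{pmatrix}$ whose top row, after pulling out the unit $d$ on the left, has the shape $(cb'' + u', c)$ treated in part (b) — so it is again a product of idempotents. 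The main obstacle is precisely this bookkeeping: making sure the elementary factor on the right does not spoil the idempotent product, which is resolved by noting the product still has zero bottom row and invoking part (b) (equivalently, Lemma \ref{table of factorizations 2 by 2}(b)) on the resulting top-row matrix.
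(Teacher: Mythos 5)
Your parts (a) and (c) follow the paper's route, but part (b) — on which (c) depends — contains a false matrix identity. You claim
\begin{equation*}
\begin{pmatrix} cb+1 & c \\ 0 & 0 \end{pmatrix}
=\begin{pmatrix} 1 & c \\ 0 & 0 \end{pmatrix}\begin{pmatrix} 1 & 0 \\ b & 0 \end{pmatrix},
\end{equation*}
but the right-hand side equals $\begin{pmatrix} 1+cb & 0 \\ 0 & 0\end{pmatrix}$: the second column of $\begin{pmatrix} 1 & 0\\ b & 0\end{pmatrix}$ is zero, so the $(1,2)$ entry $c$ is annihilated, and reversing the order of the factors does not help either (it produces a nonzero bottom row). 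The matrix $\begin{pmatrix} cb+1 & c\\0&0\end{pmatrix}$ is in general not itself idempotent, so you genuinely need a second factor with \emph{nonzero} bottom row. The paper's fix is to take
$\begin{pmatrix} ab+u & a\\0&0\end{pmatrix}=\begin{pmatrix} u&0\\0&0\end{pmatrix}\begin{pmatrix} u^{-1}ab+1 & u^{-1}a\\ -b(u^{-1}ab+1) & -bu^{-1}a\end{pmatrix}$,
where the second factor is $\begin{pmatrix}1\\-b\end{pmatrix}(u^{-1}ab+1,\; u^{-1}a)$ and is idempotent by your part (a), since $(u^{-1}ab+1)\cdot 1+(u^{-1}a)(-b)=1$.

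Two smaller points. In (c), routing through Lemma \ref{a+bx=du}(a) is circular: that lemma only gives $\begin{pmatrix} a&b\\0&0\end{pmatrix}=E\begin{pmatrix}1&0\\-x&1\end{pmatrix}$ with an invertible (hence never idempotent-absorbable) right factor, and "unwinding" it just returns the original matrix. The clean reduction is direct: with $u=a+bx$ and $v=u^{-1}$, write $\begin{pmatrix} a&b\\0&0\end{pmatrix}=\begin{pmatrix}u&0\\0&0\end{pmatrix}\begin{pmatrix}va&vb\\0&0\end{pmatrix}$ and observe $va=(vb)(-x)+1$, which is exactly the shape of (b) with unit $1$ — this is what your last sentences gesture at, and once (b) is repaired it closes the argument. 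In the converse of (a), no "extraction" of a rank-one factorization is needed: $a,b,c,d$ are given, $E=\begin{pmatrix}a\\b\end{pmatrix}(c,d)$ by definition, and your cancellation of a nonzero entry in the domain is the whole argument.
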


\begin{proof}
\noindent (a) This is easily checked.

\noindent (b) $%
\begin{pmatrix}
ab+u & a \\ 
0 & 0%
\end{pmatrix}%
=%
\begin{pmatrix}
u & 0 \\ 
0 & 0%
\end{pmatrix}%
\begin{pmatrix}
u^{-1}ab+1 & u^{-1}a \\ 
-b(u^{-1}ab+1) & -bu^{-1}a%
\end{pmatrix}%
. $

\noindent (c) By hypothesis, there exist $x\in R$ and $u\in U(R)$ such that $%
a+bx=u$. Hence $va=vb(-x)+1$ where $v=u^{-1}$. Using our previous table the
conclusion follows since one can write $%
\begin{pmatrix}
a & b \\ 
0 & 0%
\end{pmatrix}%
=%
\begin{pmatrix}
u & 0 \\ 
0 & 0%
\end{pmatrix}%
\begin{pmatrix}
va & vb \\ 
0 & 0%
\end{pmatrix}%
. $ Statement (b) above now yields the result.
\end{proof}

%

\begin{remark}
\textrm{The form of the $2\times 2$ idempotent matrix that appears in Lemma %
\ref{idempotent matrices from ab+cd=1} (a) is the only kind to consider in
the case when the ring $R$ is projective-free. Indeed in this case any $%
2\times2 $ idempotent matrix $A$ can be written as $A=%
\begin{pmatrix}
a \\ 
b%
\end{pmatrix}%
\begin{pmatrix}
c & d%
\end{pmatrix}%
$ with the condition that $ca+db=1$ (cf. the comments before Theorem \ref%
{Local ring with IP are domains}). }
\end{remark}

\smallskip

In view of this remark we look at the representation of a singular $2 \times
2$ matrix as product of idempotent matrices of the form $PQ^t$ where $P$ and 
$Q$ are columns vectors such $Q^tP=1.$

The next proposition translates the decomposition of a singular $2 \times 2$
matrix into a product of idempotents in terms of a family of equations. This
generalizes Rao's theorem (\cite{Bh}, Theorem 5) to noncommutative domains
and at the same time fills in the gaps in his original arguments (cf. \cite%
{Bh2}).

\begin{proposition}
\label{product of idempotent matrices E=AB with BA=1} Let $a,b$ be nonzero
elements in a domain $R$ such that $aR+bR=R$. Then the following are
equivalent:

\begin{enumerate}
\item[(i)] There exist an integer $n>0$ and elements $a_i,b_i,c_i,d_i\in R$, 
$i=1,\dots , n$ such that $a_1=c_1=1,\, b_1=0,\, c_n=a,\, d_n=b$,\, $%
c_ia_i+d_ib_i=1$, $1\le i \le n$ and $c_ia_{i+1}+d_ib_{i+1}=1$, $1\le i \le
n-1.$

\item[(ii)] There exist an integer $n>0$ and elements $a_i,b_i,c_i,d_i\in
R,\; 1\le i\le n,$ such that the matrix $%
\begin{pmatrix}
a & b \\ 
0 & 0%
\end{pmatrix}
$ can be written as a product $E_1\dots E_n$ of idempotent matrices $%
E_i^2=E_i$, where $E_i= 
\begin{pmatrix}
a_ic_i & a_id_i \\ 
b_ic_i & b_id_i%
\end{pmatrix}%
=%
\begin{pmatrix}
a_i \\ 
b_i%
\end{pmatrix}%
(c_i,d_i). $
\end{enumerate}

.
\end{proposition}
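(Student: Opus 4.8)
The plan is to handle both implications through the ``rank one'' shape $E_i=\binom{a_i}{b_i}(c_i,d_i)$ together with Lemma~\ref{idempotent matrices from ab+cd=1}(a): that lemma says precisely that $\binom{a_i}{b_i}(c_i,d_i)=\left(\begin{smallmatrix}a_ic_i & a_id_i\\ b_ic_i & b_id_i\end{smallmatrix}\right)$ is idempotent as soon as $c_ia_i+d_ib_i=1$, and conversely when $R$ is a domain and this matrix is nonzero. The computational heart is a telescoping identity: since $(c_i,d_i)\binom{a_{i+1}}{b_{i+1}}=c_ia_{i+1}+d_ib_{i+1}=:\lambda_i\in R$ is a scalar, multiplying the factors out gives
\[ E_1E_2\cdots E_n=\binom{a_1}{b_1}\,\lambda_1\lambda_2\cdots\lambda_{n-1}\,(c_n,d_n). \]

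For (i)$\Rightarrow$(ii) I would simply set $E_i=\binom{a_i}{b_i}(c_i,d_i)$. The relations $c_ia_i+d_ib_i=1$ make each $E_i$ idempotent, and since $\lambda_i=c_ia_{i+1}+d_ib_{i+1}=1$ while $a_1=1,b_1=0,c_n=a,d_n=b$, the telescoping product collapses to $\binom{1}{0}(a,b)=\left(\begin{smallmatrix}a&b\\0&0\end{smallmatrix}\right)$; the case $n=1$ (which forces $a=1$) is immediate. This direction is essentially a one-line verification.

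For (ii)$\Rightarrow$(i) I would start from idempotent factors $E_i=\binom{a_i}{b_i}(c_i,d_i)$ whose product is $\left(\begin{smallmatrix}a&b\\0&0\end{smallmatrix}\right)$. Since $a\neq 0$ this product is nonzero, so no $E_i$ vanishes, whence $c_ia_i+d_ib_i=1$ for every $i$ by the converse in Lemma~\ref{idempotent matrices from ab+cd=1}(a). Writing $\mu:=\lambda_1\cdots\lambda_{n-1}$ and comparing entries in $\binom{a_1}{b_1}\mu(c_n,d_n)=\left(\begin{smallmatrix}a&b\\0&0\end{smallmatrix}\right)$, the fact that $R$ is a domain forces $b_1=0$ and $a_1,\mu,c_n,d_n$ all nonzero; then $c_1a_1=1$, and since a domain is Dedekind finite (if $xy=1$ then $yx$ is an idempotent, hence $0$ or $1$, and it is not $0$) the elements $a_1,c_1$ are mutually inverse units. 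Substituting $a=a_1\mu c_n$, $b=a_1\mu d_n$ into an identity $as+bt=1$ coming from $aR+bR=R$ shows $a_1\mu$ is left invertible, hence a unit, hence $\mu$ is a unit, and therefore (again by Dedekind finiteness, factors of a unit in a domain are units) each $\lambda_i$ is a unit.

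The remaining, and I expect the most delicate, step is to \emph{normalize} the data to the exact form demanded by (i) without altering the matrices $E_i$ or their product. I will use the gauge freedom $\binom{a_i}{b_i}\mapsto\binom{a_i}{b_i}v_i$, $(c_i,d_i)\mapsto v_i^{-1}(c_i,d_i)$ by units $v_i$, which leaves $E_i$ and the identity $c_ia_i+d_ib_i=1$ untouched while sending $\lambda_i\mapsto v_i^{-1}\lambda_iv_{i+1}$. First absorb $a_1^{-1}$ into the first factor so that $a_1=c_1=1$ (with $b_1=0$ already in hand); next absorb $\mu$ into the last factor so that $(c_n,d_n)=(a,b)$, which simultaneously replaces $\lambda_{n-1}$ by $\lambda_{n-1}\mu^{-1}$ and hence makes $\lambda_1\cdots\lambda_{n-1}=1$; finally apply the gauge transformation with $v_1=v_n=1$ and $v_i=(\lambda_1\cdots\lambda_{i-1})^{-1}$ for $2\le i\le n-1$, which telescopes every $\lambda_i$ to $1$, the choice $v_n=1$ being forced-consistent precisely because the $\lambda_i$ now multiply to $1$. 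Since this last step fixes the first and last factors, the normalized data satisfy $a_1=c_1=1$, $b_1=0$, $c_n=a$, $d_n=b$, $c_ia_i+d_ib_i=1$ and $c_ia_{i+1}+d_ib_{i+1}=\lambda_i=1$, which is exactly (i). The bookkeeping in this three-step normalization — carrying out the modifications in the right order and checking that each one preserves what the earlier ones achieved — is the only genuine obstacle; everything else is formal.
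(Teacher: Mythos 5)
Your argument is correct and follows essentially the same route as the paper's: both directions rest on the rank-one form $E_i=\binom{a_i}{b_i}(c_i,d_i)$ from Lemma \ref{idempotent matrices from ab+cd=1}(a), the telescoping of the product through the scalars $\lambda_i=c_ia_{i+1}+d_ib_{i+1}$, the use of $aR+bR=R$ to force $a_1\lambda_1\cdots\lambda_{n-1}$ (and hence each $\lambda_i$) to be a unit, and a rescaling of each factor by units to normalize the linking scalars to $1$. Your three-step ``gauge'' normalization is just a tidier packaging of the paper's inductive definition of the units $u_i$, so there is nothing essentially new to compare.
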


\begin{proof}
$(i)\Rightarrow (ii)$: Lemma \ref{idempotent matrices from ab+cd=1} (a)
shows that for $1\le i \le n$, the matrix $E_i=%
\begin{pmatrix}
a_ic_i & a_id_i \\ 
b_ic_i & b_id_i%
\end{pmatrix}
$ is an idempotent. Moreover, we have 
\begin{equation*}
E_1\cdots E_n=%
\begin{pmatrix}
a_1 \\ 
b_1%
\end{pmatrix}%
(c_1,d_1)%
\begin{pmatrix}
a_2 \\ 
b_2%
\end{pmatrix}
(c_2,d_2) \cdots 
\begin{pmatrix}
a_n \\ 
b_n%
\end{pmatrix}
(c_n,d_n),
\end{equation*}
and since $c_ia_{i+1}+d_ib_{i+1}=1$, $1\le i \le n-1$ and $a_1=1,\, b_1=0,\,
c_n=a,\, d_n=b$, we obtain $E_1E_2\cdots E_n=%
\begin{pmatrix}
a & b \\ 
0 & 0%
\end{pmatrix}%
$.

\noindent $(ii)\Rightarrow (i)$: We will construct elements $a_{i}^{\prime
},b_{i}^{\prime },c_{i}^{\prime },d_{i}^{\prime }$ satisfying the conditions
stated in $(i)$. Since $R$ is a domain and $E_{i}\neq 0$, Lemma \ref%
{idempotent matrices from ab+cd=1} (a) shows that for any $1\leq i\leq n$ we
have $c_{i}a_{i}+d_{i}b_{i}=1$. We can thus write 
\begin{equation*}
\begin{pmatrix}
a & b \\ 
0 & 0%
\end{pmatrix}%
=%
\begin{pmatrix}
a_{1} \\ 
b_{1}%
\end{pmatrix}%
(c_{1},d_{1})\cdots 
\begin{pmatrix}
a_{n} \\ 
b_{n}%
\end{pmatrix}%
(c_{n},d_{n}),\mathrm{with}\;c_{i}a_{i}+d_{i}b_{i}=1, \;\,1\leq i\leq n.
\end{equation*}%
If $s$ stands for the product $s:=%
\prod_{i=1}^{n-1}(c_{i}a_{i+1}+d_{i}b_{i+1})$, then we have $a=a_{1}sc_{n}$ $%
b=a_{1}sd_{n}$, $b_{1}sc_{n}=0$ and $b_{1}sd_{n}=0$. Since $R$ is a domain,
we easily get $b_{1}=0$ and $a_{1}c_{1}=1=c_{1}a_{1}$. Thus $E_{1}=%
\begin{pmatrix}
1 & a_1d_{1} \\ 
0 & 0%
\end{pmatrix}
$. We set $a_{1}^{\prime }=c_{1}^{\prime }=1,\; b_{1}^{\prime }=0,\;
d_{1}^{\prime }=a_1d_{1}$. Then we have 
\begin{equation*}
\begin{pmatrix}
a & b \\ 
0 & 0%
\end{pmatrix}%
=%
\begin{pmatrix}
1 \\ 
0%
\end{pmatrix}%
(1,d_{1}^{\prime })%
\begin{pmatrix}
a_{2} \\ 
b_{2}%
\end{pmatrix}%
\cdots 
\begin{pmatrix}
a_{n} \\ 
b_{n}%
\end{pmatrix}%
(c_{n},d_{n}), \,\mathrm{with}\;c_{i}a_{i}+d_{i}b_{i}=1\;\mathrm{for}\,1\leq
i\leq n.
\end{equation*}%
By comparing the entries on both sides we get $a=rc_{n},b=rd_{n}$, where $%
r:=(a_{2}+d^{\prime }_{1}b_{2})\prod_{i=2}^{n-1}(c_{i}a_{i+1}+d_{i}b_{i+1})$%
. By hypothesis, there exist $x,y\in R$ such that $ax+by=1$. This implies $%
rc_{n}x+rd_{n}y=1$. This shows that $r\in U(R)$. Set $u_{1}=(a_{2}+d^{\prime
}_{1}b_{2})\in U(R)$, $a_{2}^{\prime }=a_{2}u_{1}^{-1},\, b_{2}^{\prime
}=b_{2}u_{1}^{-1},\, c_{2}^{\prime }=u_{1}c_{2}$ and $d_{2}^{\prime
}=u_{1}d_{2}$. The matrix $E_{2}$ can be written $E_{2}=%
\begin{pmatrix}
a_{2}^{\prime } \\ 
b_{2}^{\prime }%
\end{pmatrix}%
(c_{2}^{\prime },d_{2}^{\prime })$. As per our definition $c_{1}^{\prime }=1$
and so we have $c_{1}^{\prime }a_{2}^{\prime }+d_{1}^{\prime }b_{2}^{\prime
}=a_{2}u_{1}^{-1}+d_{1}b_{2}u_{1}^{-1}=1$. Once again Lemma \ref{idempotent
matrices from ab+cd=1} (a) shows that $c_{2}^{\prime }a_{2}^{\prime
}+d_{2}^{\prime }b_{2}^{\prime }=1$ (this can of course, be checked
directly, as well). We then define $u_{2}:=(1,d_{1})E_{2}%
\begin{pmatrix}
a_{3} \\ 
b_{3}%
\end{pmatrix}%
=(1,d_{1})%
\begin{pmatrix}
a_{2}^{\prime } \\ 
b_{2}^{\prime }%
\end{pmatrix}%
(c_{2}^{\prime },d_{2}^{\prime })%
\begin{pmatrix}
a_{3} \\ 
b_{3}%
\end{pmatrix}%
=c_{2}^{\prime }a_{3}+d_{2}^{\prime }b_{3}=u_{1}(c_{2}a_{3}+d_{2}b_{3})\in
U(R)$ (since $u_{2}$ is a factor of $r$). Set $a_{3}^{\prime
}=a_{3}u_{2}^{-1},\, b_{3}^{\prime }=b_{3}u_{2}^{-1},\, c_{3}^{\prime
}=u_{2}c_{3}$ and $d_{3}^{\prime }=u_{2}d_{3}$. The matrix $E_{3}$ can be
written as $E_{3}=%
\begin{pmatrix}
a_{3}^{\prime } \\ 
b_{3}^{\prime }%
\end{pmatrix}%
(c_{3}^{\prime },d_{3}^{\prime })$. This gives $c_{3}^{\prime }a_{3}^{\prime
}+d_{3}^{\prime }b_{3}^{\prime }=1$ and $c_{2}^{\prime }a_{3}^{\prime
}+d_{2}^{\prime }b_{3}^{\prime }=u_{1}(c_{2}a_{3}+d_{2}b_{3})u_{2}^{-1}=1$.
We continue this process by defining $u_{3}:=(1,d_{1})E_{2}E_{3}%
\begin{pmatrix}
a_{4} \\ 
b_{4}%
\end{pmatrix}%
=c_{3}^{\prime }a_{4}+d_{3}^{\prime }b_{4}=u_{2}(c_{3}a_{4}+d_{3}b_{4})$, $%
a_{4}^{\prime }=a_{4}u_{3}^{-1},\, b_{4}^{\prime }=b_{4}u_{3}^{-1},\,
c_{4}^{\prime }=u_{3}c_{4}$, $d_{4}^{\prime }=u_{3}d_{4}$ and so on. In
general, we define for any $1\leq i\leq n-1$, $%
u_{i}:=u_{i-1}(c_{i}a_{i+1}+d_{i}b_{i+1})$, a factor of $r$ and hence $%
u_{i}\in U(R)$. Set $a_{i+1}^{\prime }:=a_{i+1}u_{i}^{-1},b_{i+1}^{\prime
}=b_{i+1}u_{i}^{-1},c_{i+1}^{\prime }:=u_{i}c_{i+1}$ and $d_{i+1}^{\prime
}:=u_{i}d_{i+1}$. The elements $a_{i}^{\prime },b_{i}^{\prime }c_{i}^{\prime
},d_{i}^{\prime }$, where $i\geq 2,$ together with $a_{1}^{\prime
}=1=c_{1}^{\prime },\, b_{1}^{\prime }=0,\, d_{1}^{\prime }=a_1d_{1}$ will
satisfy the required equalities. 
\end{proof}

\begin{corollary}
\label{product of idempotent matrices in projective-free domains} Let $%
a,b\in R$ be elements in a projective-free domain $R$ such that $aR+bR=R$.
Then the matrix $%
\begin{pmatrix}
a & b \\ 
0 & 0%
\end{pmatrix}%
$ is a product of idempotent matrices if and only if there exist an integer $%
n>0$ and elements $a_i,b_i,c_i,d_i\in R$, $i=1,\dots , n$ such that $%
a_1=c_1=1,\, b_1=0,\, c_n=a,\, d_n=b$, $c_ia_i+d_ib_i=1$, $1\le i \le n$ and 
$c_ia_{i+1}+d_ib_{i+1}=1$, $1\le i \le n-1$.
\end{corollary}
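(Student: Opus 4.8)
The plan is to observe that this corollary is an almost immediate consequence of Proposition \ref{product of idempotent matrices E=AB with BA=1}, with the only extra ingredient being the Remark that in a projective-free ring every $2\times 2$ idempotent matrix has the special shape $\binom{a_i}{b_i}(c_i,d_i)$ with $c_ia_i+d_ib_i=1$. So the work is really just to bridge the gap between ``product of idempotents'' and ``product of idempotents of the special form'' that appears in Proposition \ref{product of idempotent matrices E=AB with BA=1}(ii).

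First I would handle the ``only if'' direction. Suppose $\begin{pmatrix} a & b \\ 0 & 0\end{pmatrix} = E_1\cdots E_n$ is a product of idempotent matrices $E_i\in M_2(R)$. We may discard any $E_i$ equal to the identity, and no $E_i$ can be zero (since the product is nonzero, as $aR+bR=R$ forces $(a,b)\neq(0,0)$). Since $R$ is projective-free, each nonzero proper idempotent $E_i$ has rank $1$, so by the cited fact from Cohn (the comments before Theorem \ref{Local ring with IP are domains}) we can write $E_i=P_iQ_i$ with $P_i\in M_{2\times 1}(R)$, $Q_i\in M_{1\times 2}(R)$ and $Q_iP_i=1$; writing $P_i=\binom{a_i}{b_i}$ and $Q_i=(c_i,d_i)$ gives $c_ia_i+d_ib_i=1$ and $E_i=\begin{pmatrix} a_ic_i & a_id_i \\ b_ic_i & b_id_i\end{pmatrix}$. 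Thus $\begin{pmatrix} a & b \\ 0 & 0\end{pmatrix}$ is a product of idempotent matrices of exactly the form appearing in condition (ii) of Proposition \ref{product of idempotent matrices E=AB with BA=1}. Since a projective-free domain is in particular a domain, that proposition applies and the implication (ii)$\Rightarrow$(i) yields the desired elements $a_i,b_i,c_i,d_i$ with $a_1=c_1=1$, $b_1=0$, $c_n=a$, $d_n=b$, and the two families of ``coupling'' equations. (One caveat: the $n$ and the indexing may shift after dropping identity factors, but since the conclusion only asserts existence of \emph{some} $n$ and \emph{some} such family, this is harmless; if desired one can instead allow trivial factors by noting $I_2=\binom{1}{0}(1,0)\cdot\binom{0}{1}(0,1)$ is itself such a product, but that is not needed.)

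For the ``if'' direction, suppose such elements $a_i,b_i,c_i,d_i$ exist. Then $E_i:=\binom{a_i}{b_i}(c_i,d_i)$ is, by Lemma \ref{idempotent matrices from ab+cd=1}(a) together with $c_ia_i+d_ib_i=1$, an idempotent matrix, and this is exactly implication (i)$\Rightarrow$(ii) of Proposition \ref{product of idempotent matrices E=AB with BA=1}: the product $E_1\cdots E_n$ telescopes to $\begin{pmatrix} a & b \\ 0 & 0\end{pmatrix}$ using $c_ia_{i+1}+d_ib_{i+1}=1$ for $1\le i\le n-1$ and the boundary conditions $a_1=1$, $b_1=0$, $c_n=a$, $d_n=b$. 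So $\begin{pmatrix} a & b \\ 0 & 0\end{pmatrix}$ is a product of idempotent matrices.

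There is essentially no hard step here — the corollary is a packaging of Proposition \ref{product of idempotent matrices E=AB with BA=1} together with the structural remark about idempotents over projective-free rings. The one point requiring a moment's care is the ``only if'' direction: one must invoke projective-freeness (not just the domain hypothesis) to guarantee that \emph{every} idempotent factor, not merely the ones produced by the proposition, can be put in rank-one form $PQ$ with $QP=1$, so that Proposition \ref{product of idempotent matrices E=AB with BA=1}(ii) is genuinely applicable to an arbitrary idempotent factorization. That is exactly the content of the Remark following Lemma \ref{idempotent matrices from ab+cd=1}, and it is what makes the extra hypothesis ``projective-free'' (beyond ``domain'') do real work in this corollary.
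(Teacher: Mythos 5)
Your proof is correct and follows essentially the same route as the paper: the ``if'' direction is Proposition \ref{product of idempotent matrices E=AB with BA=1} (i)$\Rightarrow$(ii), and the ``only if'' direction uses projective-freeness (via the Cohn decomposition $E_i=P_iQ_i$ with $Q_iP_i=1$) to put an arbitrary idempotent factorization into the form required by (ii)$\Rightarrow$(i). The extra care you take about discarding identity factors and excluding zero factors is a harmless (and slightly more explicit) elaboration of what the paper leaves implicit.
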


\begin{proof}
By the above proposition we know that the conditions mentioned in the
corollary are sufficient. Since any idempotent $2\times 2$ matrix with
coefficients in a projective-free domain is of the form $%
\begin{pmatrix}
a \\ 
b%
\end{pmatrix}%
(c,d)$ with $ca+db=1$, the implication $(ii) \Rightarrow (i)$ in above
Proposition shows that the conditions are also necessary.
\end{proof}

\section{Singular matrices over B\'{e}zout domains}

We first mention the classical facts that for any two elements $a,b$ in a
right B\'ezout domain $R$ both $aR +bR$ and $aR\cap bR$ are principal right
ideals and such a domain is a right Ore domain. The next theorem is due to
Amitsur \cite{Am}. We provide a different proof of this theorem. This proof
is inspired by results of Cohn (cf. \cite{C2}).

\begin{theorem}
\label{Hermite is Bezout} A domain $R$ is right Hermite if and only if it is
right B\'ezout.
\end{theorem}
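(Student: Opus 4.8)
The plan is to prove the two implications separately, the easy direction first.

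\medskip

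\noindent\textbf{Right Hermite $\Rightarrow$ right B\'ezout.} Suppose $R$ is right Hermite and let $a,b\in R$. By definition there is an invertible $2\times 2$ matrix $P$ and an element $d\in R$ with $(a,b)P=(d,0)$. Writing $Q=P^{-1}$, the relation $(a,b)=(d,0)Q$ shows that $aR+bR\subseteq dR$, while $(a,b)P=(d,0)$ shows $d\in aR+bR$, so $d$ generates $aR+bR$. Hence every finitely generated right ideal of $R$ generated by two elements is principal; by an immediate induction on the number of generators (grouping the first two, then absorbing one generator at a time), every finitely generated right ideal is principal, i.e. $R$ is right B\'ezout.

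\medskip

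\noindent\textbf{Right B\'ezout $\Rightarrow$ right Hermite.} This is the substantive direction. Let $a,b\in R$. As recalled just before the theorem, a right B\'ezout domain is a right Ore domain, and $aR+bR$ is a principal right ideal, say $aR+bR=dR$. If $d=0$ then $a=b=0$ and there is nothing to prove; otherwise write $a=da'$, $b=db'$ with $a',b'\in R$, and since $d\in aR+bR$ write $d=ax+by=d(a'x+b'y)$, so by cancellation in the domain $a'x+b'y=1$. Thus $(a',b')$ is a right unimodular row, and $(a,b)=d(a',b')$. It therefore suffices to show that the unimodular row $(a',b')$ is completable, i.e. is the top (equivalently bottom) row of some matrix in $GL_2(R)$: if $P^{-1}$ has bottom row $(a',b')$ then, after multiplying by a permutation matrix if necessary, $(a',b')P=(1,0)$, hence $(a,b)P=(d,0)$, as required.

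\medskip

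\noindent So the crux is: in a right B\'ezout domain, every right unimodular row of length $2$ is completable. For this I would use the Ore condition together with the equation $a'x+b'y=1$ directly: from $a'x+b'y=1$, the matrix
\begin{equation*}
P=\begin{pmatrix} x & -b' \\ y & a' \end{pmatrix}
\end{equation*}
has $(a',b')P=(a'x+b'y,\, -a'b'+b'a')$. The second entry is $b'a'-a'b'$, which need not vanish in a noncommutative ring, so a naive choice fails; here is where the B\'ezout/Ore hypothesis must be exploited, presumably by replacing $x,y$ by a better pair obtained from a common right multiple of $a'$ and $b'$, following Cohn's technique (\cite{C2}) cited by the authors. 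Concretely, I expect one works inside $aR\cap bR$, which is also a principal right ideal of a right B\'ezout domain, to manufacture the needed relations; this is the one step I expect to be genuinely delicate, and it is exactly the point where the proof departs from the commutative case. Once completability of unimodular $2$-rows is in hand, the theorem follows by the argument of the previous paragraph.
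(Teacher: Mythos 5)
Your reduction is sound as far as it goes: the Hermite-to-B\'ezout direction is exactly the standard argument (and the paper's), and in the converse direction the factorization $(a,b)=d(a',b')$ with $a'x+b'y=1$ correctly reduces the problem to completing the length-$2$ unimodular row $(a',b')$. But the proposal stops precisely at the step that carries the whole content of the theorem: you never produce the invertible matrix. You rightly note that the naive completion $\left(\begin{smallmatrix} x & -b' \\ y & a'\end{smallmatrix}\right)$ fails noncommutatively and rightly guess that $aR\cap bR$ is the relevant tool, but ``I expect one works inside $aR\cap bR$ to manufacture the needed relations'' is a plan, not a proof. That is a genuine gap, and it is exactly where the B\'ezout hypothesis (as opposed to mere existence of a gcd) gets used.

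For comparison, here is how the paper closes it, for a row $a,b$ with $ax+by=1$. Write $aR\cap bR=mR$ and $au=m=bv$. From $a(xa-1)=-bya\in aR\cap bR=mR$ one obtains $c$ with $xa-1=uc$ and $vc=-ya$ (cancelling $a$, respectively $b$, in the domain), and from $axb=b(1-yb)\in mR$ one obtains $d$ with $xb=ud$ and $1-yb=vd$. Setting
\begin{equation*}
X:=\begin{pmatrix} x & -u \\ y & v\end{pmatrix},\qquad
A:=\begin{pmatrix} a & b \\ c & d\end{pmatrix},
\end{equation*}
these four relations give $XA=I$; since a right B\'ezout domain is right Ore and hence embeds in a division ring, $AX=I$ as well, and then $(a,b)X=(ax+by,\,-au+bv)=(1,0)$. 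Note that the completing entries $c,d$ sit in the matrix $A$ whose top row is $(a,b)$, while the generators $u,v$ of the intersection $aR\cap bR$ sit in $X$; the identities $xa-1=uc$, $vc=-ya$, $xb=ud$, $1-yb=vd$ are precisely the substitute for the commutativity your naive matrix needed. Until this construction (or an equivalent one) is supplied, the ``if'' direction is not established.
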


\begin{proof}

Suppose $R$ is right Hermite. Then for $a,b\in R$ there exist $d\in R$ and $%
P\in GL_{2}(R)$ such that $(a,b)P=(d,0)$. Hence we have $dR\subseteq aR+bR$.
Since we also have $(d,0)P^{-1}=(a,b)$, we conclude that $aR+bR=dR\simeq R$.
This yields $R$ is right B\'ezout.

\noindent Conversely, suppose $R$ is right B\'{e}zout and so it is a right
Ore domain. Let $a,b\in R$. We first consider the case when $aR+bR=R$. We
know $aR\cap bR$ is a principal right ideal, say, $mR$. Let $x,y,u,v\in R$
be such that $ax+by=1$ and $au=m=bv$. We then obtain $a(xa-1)=-bya\in mR$
and so there exists $c\in R$ such that $xa-1=uc,\;vc=-ya$. Similarly, from $%
axb=b(1-yb)\in mR$, we get $d\in R$ such that $xb=ud$ and $1-yb=vd$. Let us
then consider the matrices 
\begin{equation*}
A:=%
\begin{pmatrix}
a & b \\ 
c & d%
\end{pmatrix}%
\quad \mathrm{and}\quad X:=%
\begin{pmatrix}
x & -u \\ 
y & v%
\end{pmatrix}%
.
\end{equation*}%
We can check that the above relations give $XA=I$ and so $AX=I$, since $R$
is embeddable in a division ring. This, in turn, leads to $(a,b)X=(1,0)$.

\noindent In the general case, we have $aR+bR=dR$. We can write $%
a=da^{\prime }$, $b=db^{\prime }$ and then since $R$ is a domain, $a^{\prime
}R+b^{\prime }R=R$. So $R$ is right B\'ezout as shown above.
\end{proof}

\begin{definition}
We say that a ring $R$ has the $IP_2$ property if every $2\times 2$ singular
matrix is a product of idempotent matrices. 
\end{definition}

Of course, every ring for which singular matrices are products of idempotent
matrices has $IP_{2}$. In particular, every commutative euclidean domain has
the $IP_{2}$ property as shown by Laffey (cf. \cite{L}).

\begin{lemma}
\label{Bezout and stable range $1$ implies IP2} A left (right) B\'ezout
domain with stable range $1$ has the $IP_{2}$ property.
\end{lemma}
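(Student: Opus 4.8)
The plan is to reduce an arbitrary $2\times 2$ singular matrix $A$ over a left (right) Bézout domain $R$ with stable range $1$ to one of the normal forms already handled in Lemma \ref{table of factorizations 2 by 2} and Lemma \ref{idempotent matrices from ab+cd=1}. First I would record the standard fact that in a Bézout domain the row $(a,b)$ of the first row of $A$ satisfies $aR+bR=dR$ for some $d$, so we may write $a=da'$, $b=db'$ with $a'R+b'R=R$; similarly for the columns. Since $A$ is singular, its two rows are left-linearly dependent over the division ring of fractions (here is where the Ore/Bézout hypothesis is used), and likewise its columns are right-linearly dependent. The strategy is to peel off the factor $d$ using part (d)/(e) of Lemma \ref{table of factorizations 2 by 2} and part (b) of Lemma \ref{idempotent matrices from ab+cd=1}, and to move the dependent row/column into position using conjugation by an invertible matrix — which is legitimate since $R$ being $GE_2$ (Lemma \ref{consequences of stable range$1$}(a)) means such conjugating matrices are themselves products of elementary and invertible-diagonal matrices, hence products of idempotent matrices together with a genuine unit that can be absorbed.

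Concretely, I would argue as follows. Because $R$ has stable range $1$, Lemma \ref{bottom rows zero copy(1)} (applied to the singular matrix $A$, which has a nonzero left annihilator after we pass to a unimodular annihilating row — available since $R$ is right Bézout hence right Hermite) produces $P\in GL_2(R)$ with $PAP^{-1}$ having bottom row zero. So it suffices to treat $B:=\begin{pmatrix} a & b \\ 0 & 0\end{pmatrix}$ and then conjugate back, noting that $P$ and $P^{-1}$, being in $GE_2$, are each products of elementary matrices, permutation-type matrices $\begin{pmatrix}0&1\\1&0\end{pmatrix}$ composed with diagonals, and such matrices are handled by Lemma \ref{Factorizations of 3 by 3 matrices} only in the $n\ge 3$ padded form — so instead I would keep $P$ as a genuine invertible matrix and only at the very end observe that an invertible matrix conjugating a product of idempotents yields again a product of idempotents. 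For $B=\begin{pmatrix} a & b \\ 0 & 0\end{pmatrix}$ itself, the stable range $1$ hypothesis gives $x$ with $a+bx=u\in U(R)$, and then Lemma \ref{idempotent matrices from ab+cd=1}(c) — or equivalently Lemma \ref{a+bx=du}(a) — expresses $B$ as a product of idempotent matrices times $\begin{pmatrix}1&0\\-x&1\end{pmatrix}$, which is elementary; and an elementary matrix, while not idempotent, appears as the middle factor in the $3\times 3$ identity of Lemma \ref{Factorizations of 3 by 3 matrices}(a), so in the $2\times 2$ setting I instead absorb it by noting $\begin{pmatrix}a&b\\0&0\end{pmatrix}\begin{pmatrix}1&0\\-x&1\end{pmatrix}=\begin{pmatrix}a-bx&b\\0&0\end{pmatrix}$ is a red herring — the correct move is to use the factorization of $B$ directly as in Lemma \ref{idempotent matrices from ab+cd=1}(c), which already outputs a pure product of idempotents.

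So the clean route is: (1) write the annihilating row as unimodular (right Bézout $\Rightarrow$ right Hermite, Lemma \ref{relations between definitons}) and get $P$ with $PA$ having zero bottom row; (2) the first row of $PA$ is some $(a,b)$ with, say, $aR+bR=dR$; factor out $d$ via Lemma \ref{table of factorizations 2 by 2}(d)/(e) reducing to the unimodular case $a'R+b'R=R$; (3) apply stable range $1$ to get $a'+b'x$ a unit and invoke Lemma \ref{idempotent matrices from ab+cd=1}(c) to write $\begin{pmatrix}a'&b'\\0&0\end{pmatrix}$ as a product of idempotents, hence also $\begin{pmatrix}a&b\\0&0\end{pmatrix}=\begin{pmatrix}d(\cdot)&d(\cdot)\\0&0\end{pmatrix}$ after re-multiplying by the idempotent factorization of $\begin{pmatrix}d&0\\0&0\end{pmatrix}$-type blocks from Lemma \ref{table of factorizations 2 by 2}; (4) conjugate back by $P^{-1}$, using that conjugation of a product of idempotents by an invertible matrix is again a product of idempotents. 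The main obstacle I anticipate is step (4) together with the bookkeeping in step (2): one must be careful that the "factor out $d$" step genuinely produces idempotent factors (not just triangular ones) and that conjugation does not secretly require $P$ to be a product of idempotents — it does not, because $E\mapsto P^{-1}EP$ sends an idempotent to an idempotent, so $P^{-1}(E_1\cdots E_k)P=(P^{-1}E_1P)\cdots(P^{-1}E_kP)$. Once that observation is in hand the proof is essentially an assembly of the preliminary lemmas, with stable range $1$ used exactly twice: once through Lemma \ref{bottom rows zero copy(1)}'s reliance on Hermite-ness (which here follows from Bézout alone) and once through the $a+bx\in U(R)$ step feeding Lemma \ref{idempotent matrices from ab+cd=1}(c).
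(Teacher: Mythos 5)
Your proof is correct and takes essentially the same route as the paper's: reduce to a zero bottom row via Lemma \ref{bottom rows zero copy(1)}, factor out the generator $d$ of $aR+bR$ as $\left(\begin{smallmatrix}d&0\\0&0\end{smallmatrix}\right)$ (a product of idempotents by Lemma \ref{table of factorizations 2 by 2}(a)) times a matrix with unimodular first row, then finish with stable range $1$ and Lemma \ref{idempotent matrices from ab+cd=1}(c), conjugating back at the end. The only slip is the citation of Lemma \ref{table of factorizations 2 by 2}(d)/(e) for the factoring-out step (those require a unit entry), but you correct this yourself in the final assembly, so the argument stands.
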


\begin{proof}
Let $A\in M_2(R)$ be a singular matrix. By Lemma \ref{bottom rows zero
copy(1)} we may assume that the matrix has a bottom row equal to zero. Since
matrices of the form $%
\begin{pmatrix}
a & 0 \\ 
0 & 0%
\end{pmatrix}
$ are products of idempotent matrices (cf. Lemma \ref{table of
factorizations 2 by 2} ), we may further assume that the first row of $A$ is
unimodular. The hypothesis of stable range $1$ and Lemma \ref{idempotent
matrices from ab+cd=1} (c) show that $R$ has the $IP_2$ property.
\end{proof}


In the case of a commutative B\'{e}zout domain we can replace the stable range 
$1$ hypothesis by the $IP2$ property and still get strong conclusions as shown 
in Proposition \ref{link between product of
idempotents and Hermite} and Corollary \ref{Commutative Bezout and IP2
implies elementary Hermite}. Indeed proposition \ref{link between product of
idempotents and Hermite} provides a relationship between a decomposition of
a singular matrix into a product of idempotent matrices and B\'{e}zout
(equivalently, Hermite) domains.

\begin{proposition}
\label{link between product of idempotents and Hermite} Let $a,b$ be
elements in a commutative B\'{e}zout domain $R$ with $aR+bR=R$. Then the
following are equivalent.

\begin{enumerate}
\item[(i)] $%
\begin{pmatrix}
a & b \\ 
0 & 0%
\end{pmatrix}%
$ is a product of $n$ idempotent matrices.

\item[(ii)] There exist elements $r_0,r_1 \dots r_{2n-2} \in
R $ such that 
\begin{equation*}
(a,b)=(1,0)%
\begin{pmatrix}
r_{2n-2} & 1 \\ 
1 & 0%
\end{pmatrix}%
\begin{pmatrix}
r_{2n-3} & 1 \\ 
1 & 0%
\end{pmatrix}%
\cdots 
\begin{pmatrix}
r_0 & 1 \\ 
1 & 0%
\end{pmatrix}%
\begin{pmatrix}
0 & -1 \\ 
1 & 0%
\end{pmatrix}%
.
\end{equation*}
\end{enumerate}
\end{proposition}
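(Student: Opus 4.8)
The strategy is to connect the idempotent factorization studied in Proposition~\ref{product of idempotent matrices E=AB with BA=1} and Corollary~\ref{product of idempotent matrices in projective-free domains} with the continued-fraction type product of $2\times 2$ matrices appearing in (ii). Since a commutative B\'ezout domain is projective-free (being a B\'ezout domain it is a Pr\"ufer domain of a projective-free type; more directly, by Theorem~\ref{Hermite is Bezout} it is Hermite and finitely generated projectives are free), Corollary~\ref{product of idempotent matrices in projective-free domains} applies: the matrix $\begin{pmatrix} a & b \\ 0 & 0\end{pmatrix}$ is a product of $n$ idempotents precisely when there exist $a_i,b_i,c_i,d_i\in R$, $1\le i\le n$, with $a_1=c_1=1$, $b_1=0$, $c_n=a$, $d_n=b$, each $c_ia_i+d_ib_i=1$, and $c_ia_{i+1}+d_ib_{i+1}=1$ for $1\le i\le n-1$. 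So I would prove the equivalence (i) $\Leftrightarrow$ (ii) by showing that each of these two conditions is equivalent to the displayed factorization of the row $(a,b)$.

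First I would handle (i) $\Rightarrow$ (ii). Starting from the data $a_i,b_i,c_i,d_i$ of Corollary~\ref{product of idempotent matrices in projective-free domains}, I would exhibit the $r_j$ explicitly. The relations $c_ia_i+d_ib_i=1$ say that $(c_i,d_i)$ and $(a_i,b_i)^t$ are "dual" unimodular pairs, and $c_ia_{i+1}+d_ib_{i+1}=1$ chains consecutive ones together. Each idempotent $E_i=\begin{pmatrix}a_i\\ b_i\end{pmatrix}(c_i,d_i)$ contributes, in the telescoping product $E_1\cdots E_n$, a scalar factor $(c_ia_{i+1}+d_ib_{i+1})=1$ between the $i$-th row vector and the $(i+1)$-th column vector; one shows by induction that passing from the row vector $(c_i,d_i)$ to the next data is exactly a right multiplication by a matrix of the form $\begin{pmatrix} r & 1\\ 1 & 0\end{pmatrix}$ once one normalizes (using that $R$ is a domain and each relevant quantity is a unit, as in the proof of Proposition~\ref{product of idempotent matrices E=AB with BA=1}). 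The initial pair $(1,0)$ and the terminal twist $\begin{pmatrix}0 & -1\\ 1 & 0\end{pmatrix}$ account for the boundary conditions $a_1=c_1=1$, $b_1=0$ and $c_n=a$, $d_n=b$. Concretely, since $\begin{pmatrix} r & 1\\ 1 & 0\end{pmatrix}$ is invertible with determinant $-1$, the product on the right of (ii) is an invertible matrix whose bottom-left-type relations reproduce the chain $c_ia_{i+1}+d_ib_{i+1}=1$, and reading off the first row yields $(a,b)$.

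For (ii) $\Rightarrow$ (i), I would reverse this: given the $r_j$, define partial products $M_k = \begin{pmatrix} r_k & 1\\ 1 & 0\end{pmatrix}\cdots\begin{pmatrix} r_0 & 1\\ 1 & 0\end{pmatrix}\begin{pmatrix}0 & -1\\ 1 & 0\end{pmatrix}$ and read the pairs $(a_i,b_i)$, $(c_i,d_i)$ off the rows of appropriate partial products; the identity $\begin{pmatrix} r & 1\\ 1 & 0\end{pmatrix}^{-1} = \begin{pmatrix}0 & 1\\ 1 & -r\end{pmatrix}$ guarantees the duality relations $c_ia_i+d_ib_i=\pm1$ automatically, and the chaining of consecutive partial products gives $c_ia_{i+1}+d_ib_{i+1}=1$ (absorbing signs into the $a_i,c_i$ if needed, which is harmless over a domain). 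Then Corollary~\ref{product of idempotent matrices in projective-free domains} produces the $n$ idempotents. The main obstacle I anticipate is purely bookkeeping: getting the indexing of the $2n-1$ matrices (the $2n-2$ factors $\begin{pmatrix} r_j & 1\\ 1 & 0\end{pmatrix}$ plus the final $\begin{pmatrix}0 & -1\\ 1 & 0\end{pmatrix}$) to line up with the $n$ idempotents — note each idempotent $E_i=\begin{pmatrix}a_i\\ b_i\end{pmatrix}(c_i,d_i)$ morally carries "two" pieces of data (a column and a row), which is why $2n-2$ parameters appear — and keeping track of the alternating signs coming from $\det\begin{pmatrix} r & 1\\ 1 & 0\end{pmatrix}=-1$. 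Commutativity of $R$ is used to move scalar units freely past the vectors when normalizing, and the unimodularity hypothesis $aR+bR=R$ is what forces the intermediate products $u_i$ to be units, exactly as in Proposition~\ref{product of idempotent matrices E=AB with BA=1}.
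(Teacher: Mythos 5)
Your plan follows the paper's proof essentially exactly: both directions route through Corollary~\ref{product of idempotent matrices in projective-free domains}, with (i)$\Rightarrow$(ii) obtained by the explicit choices $r_{2(n-i)}=c_{n-i+1}d_{n-i}-c_{n-i}d_{n-i+1}$ and $r_{2(n-i)-1}=a_{n-i}b_{n-i+1}-a_{n-i+1}b_{n-i}$ and telescoping, and (ii)$\Rightarrow$(i) by reconstructing the $a_i,b_i,c_i,d_i$ step by step (the paper solves at each stage a $2\times 2$ linear system whose determinant is $c_ia_i+d_ib_i=1$, which is the same bookkeeping as your partial-product/determinant argument, and the signs in fact come out as $+1$ with no absorption needed). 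The one remark worth making is that your write-up stays at the level of a plan --- the substance is exactly the explicit formulas for the $r_j$ and the one-line matrix identities verifying the telescoping --- and that no normalization by units is actually required in (i)$\Rightarrow$(ii), since the Corollary already supplies the data satisfying $c_ia_i+d_ib_i=1$.
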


\begin{proof}
$(i)\Rightarrow (ii)$: Since a right B\'{e}zout domain is projective-free,
Corollary \ref{product of idempotent matrices in projective-free domains}
shows that there exist elements $a_{i},b_{i},c_{i},d_{i}\in R$, $i=1,\dots
,n $ such that $a_{1}=c_{1}=1,\,b_{1}=0,\,c_{n}=a,\,d_{n}=b$, $%
c_{i}a_{i}+d_{i}b_{i}=1$ for $1\leq i\leq n$ and $%
c_{i}a_{i+1}+d_{i}b_{i+1}=1 $, $1\leq i\leq n-1$. Write 
\begin{equation*}
\begin{pmatrix}
a & b \\ 
0 & 0%
\end{pmatrix}%
=%
\begin{pmatrix}
a_{1} \\ 
b_{1}%
\end{pmatrix}%
(c_{1},d_{1})%
\begin{pmatrix}
a_{2} \\ 
b_{2}%
\end{pmatrix}%
(c_{2},d_{2})\cdots 
\begin{pmatrix}
a_{n} \\ 
b_{n}%
\end{pmatrix}%
(c_{n},d_{n}).
\end{equation*}%
Let us put $r _{2n-2}=c_{n}d_{n-1}-c_{n-1}d_{n}$ and $r_{2n-3}
=a_{n-1}b_{n}-a_{n}b_{n-1}$. We then write, successively, 
\begin{equation*}
\begin{pmatrix}
c_{n} & d_{n} \\ 
0 & 0%
\end{pmatrix}%
=%
\begin{pmatrix}
1 & 0 \\ 
0 & 0%
\end{pmatrix}%
\begin{pmatrix}
c_{n} & d_{n} \\ 
b_{n} & -a_{n}%
\end{pmatrix}%
=%
\begin{pmatrix}
1 & 0 \\ 
0 & 0%
\end{pmatrix}%
\begin{pmatrix}
r_{2n-2} & 1 \\ 
1 & 0%
\end{pmatrix}%
\begin{pmatrix}
b_{n} & -a_{n} \\ 
c_{n-1} & d_{n-1}%
\end{pmatrix}%
,
\end{equation*}%
and 
\begin{equation*}
\begin{pmatrix}
a & b \\ 
0 & 0%
\end{pmatrix}%
=%
\begin{pmatrix}
c_{n} & d_{n} \\ 
0 & 0%
\end{pmatrix}%
=%
\begin{pmatrix}
1 & 0 \\ 
0 & 0%
\end{pmatrix}%
\begin{pmatrix}
r_{2n-2} & 1 \\ 
1 & 0%
\end{pmatrix}%
\begin{pmatrix}
r_{2n-3} & 1 \\ 
1 & 0%
\end{pmatrix}%
\begin{pmatrix}
c_{n-1} & d_{n-1} \\ 
b_{n-1} & -a_{n-1}%
\end{pmatrix}%
.
\end{equation*}%
Continuing this process we put, for $1\leq i\leq n-1$,\thinspace\ $r_{2(n-i)}
=c_{n-i+1}d_{n-i}-c_{n-i}d_{n-i+1}$ and $r_{2(n-i)-1}=a_{n-i}b_{n-i+1}-a_{n-i+1}b_{n-i}$. 
With these notations one
gets: 
\begin{equation*}
\begin{pmatrix}
a & b \\ 
0 & 0%
\end{pmatrix}%
=%
\begin{pmatrix}
1 & 0 \\ 
0 & 0%
\end{pmatrix}%
\begin{pmatrix}
r_{2n-2} & 1 \\ 
1 & 0%
\end{pmatrix}%
\cdots 
\begin{pmatrix}
r_{1} & 1 \\ 
1 & 0%
\end{pmatrix}%
\begin{pmatrix}
c_{1} & d_{1} \\ 
b_{1} & -a_{1}%
\end{pmatrix}%
,
\end{equation*}%
where $c_{1}=1,\,b_{1}=0,\,a_{1}=1$. Hence 
\begin{equation*}
\begin{pmatrix}
c_{1} & d_{1} \\ 
b_{1} & -a_{1}%
\end{pmatrix}%
=%
\begin{pmatrix}
1 & d_{1} \\ 
0 & -1%
\end{pmatrix}%
=%
\begin{pmatrix}
-d_{1} & 1 \\ 
1 & 0%
\end{pmatrix}%
\begin{pmatrix}
0 & -1 \\ 
1 & 0%
\end{pmatrix}%
.
\end{equation*}%
This, finally, yields that 
\begin{equation*}
\begin{pmatrix}
a & b \\ 
0 & 0%
\end{pmatrix}%
=%
\begin{pmatrix}
1 & 0 \\ 
0 & 0%
\end{pmatrix}%
\begin{pmatrix}
r_{2n-2} & 1 \\ 
1 & 0%
\end{pmatrix}%
\cdots 
\begin{pmatrix}
r_{1} & 1 \\ 
1 & 0%
\end{pmatrix}%
\begin{pmatrix}
r_{0} & 1 \\ 
1 & 0%
\end{pmatrix}%
\begin{pmatrix}
0 & -1 \\ 
1 & 0%
\end{pmatrix}%
,
\end{equation*}
completing the proof with $r_{0}=-d_{1}$.

$(ii)\Rightarrow (i)$: We are given $2n-1$ elements $r_{i},\,0\leq
i\leq 2n-2$, and we want to produce $4n$ elements $a_{j},b_{j},c_{j},d_{j}$, 
$1\leq j\leq n$, satisfying the relations given in the Proposition \ref%
{product of idempotent matrices E=AB with BA=1}. Let us show how to retrace
our steps. First, we have $a_{1}=1=c_{1},\,b_{1}=0,\,d_{1}=-r_{0}$,
and $a_{1}c_{1}+d_{1}b_{1}=1$. Suppose that we have constructed $%
a_{j},b_{j},c_{j},d_{j}$ satisfying the necessary relations for all $1\leq
j\leq i$ and let us show how to construct $a_{i+1},b_{i+1},c_{i+1},d_{i+1}$.
We determine $a_{i+1}$ and $b_{i+1}$ via the system of equations:
\begin{eqnarray}
a_ib_{i+1}-b_ia_{i+1} & = r_{2i-1}, \\
d_ib_{i+1}+c_ia_{i+1} & =1.
\end{eqnarray}
Since $a_ic_i+d_ib_i=1$, the above system has a unique solution. To
determine $c_{i+1}$ and $d_{i+1}$ we use the following equations, 
\begin{eqnarray}
d_ic_{i+1}-c_id_{i+1} & = r_{2i}, \\
a_{i+1}c_{i+1}+b_{i+1}d_{i+1} & =1.
\end{eqnarray}
Then $d_ib_{i+1}+c_ia_{i+1}=1$ gives that the above system has a unique
solution.
\end{proof}

The next corollary gives another proof of Lemma 2 in Laffey's paper \cite{L}.

\begin{corollary}
\label{Euclidean rings have IP2} Let $R$ be a euclidean domain. Then $R$ has
the $IP_2$ property.
\end{corollary}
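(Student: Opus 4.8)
The plan is to reduce, exactly as in the proof of Lemma~\ref{Bezout and stable range $1$ implies IP2}, to the case of a matrix $\begin{pmatrix} a & b \\ 0 & 0 \end{pmatrix}$ with $aR+bR=R$, and then to read off the continued-fraction decomposition required by Proposition~\ref{link between product of idempotents and Hermite} from the euclidean algorithm applied to $(a,b)$.

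For the reduction: a euclidean domain is a principal ideal domain, hence a commutative B\'ezout domain, so Lemma~\ref{bottom rows zero copy(1)} applies to a singular $A\in M_2(R)$ (since $\det A=0$, the two rows of $A$ are $R$-linearly dependent, which provides $0\neq u\in R^2$ with $uA=0$); after conjugating by an invertible matrix we may assume $A=\begin{pmatrix} a & b \\ 0 & 0 \end{pmatrix}$, and a conjugate of a product of idempotents is again such a product. Writing $d=\gcd(a,b)$, $a=da'$, $b=db'$ with $a'R+b'R=R$, one has $\begin{pmatrix} a & b \\ 0 & 0 \end{pmatrix}=\begin{pmatrix} d & 0 \\ 0 & 0 \end{pmatrix}\begin{pmatrix} a' & b' \\ 0 & 0 \end{pmatrix}$, and the first factor is a product of idempotents by Lemma~\ref{table of factorizations 2 by 2}(a); the degenerate cases (where $a'$ or $b'$ is zero, in particular $a=b=0$) are immediate from Lemma~\ref{table of factorizations 2 by 2}. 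So it remains to treat $\begin{pmatrix} a & b \\ 0 & 0 \end{pmatrix}$ with $aR+bR=R$ and, say, $a,b$ nonzero.

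By Proposition~\ref{link between product of idempotents and Hermite} it suffices to produce $n\geq 1$ and $r_0,\dots,r_{2n-2}\in R$ realizing the displayed identity; right-multiplying that identity by $\begin{pmatrix} 0 & -1 \\ 1 & 0 \end{pmatrix}^{-1}=\begin{pmatrix} 0 & 1 \\ -1 & 0 \end{pmatrix}$ converts it into the requirement that the unimodular row $(-b,a)$ be written as $(1,0)$ times a product of an odd number $2n-1$ of matrices of the shape $\begin{pmatrix} r & 1 \\ 1 & 0 \end{pmatrix}$, $r\in R$. The euclidean algorithm supplies this. Running it on $(-b,a)$, i.e.\ repeatedly replacing $(c,d)$ by $(d,c-qd)=(c,d)\begin{pmatrix} q & 1 \\ 1 & 0 \end{pmatrix}^{-1}$ where $q$ is a quotient, terminates at $(u,0)$ with $u\in U(R)$ (the algorithm returns $\gcd(-b,a)$, a unit), whence $(-b,a)=(u,0)\begin{pmatrix} q_k & 1 \\ 1 & 0 \end{pmatrix}\cdots\begin{pmatrix} q_1 & 1 \\ 1 & 0 \end{pmatrix}$. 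Finally $(u,0)$ itself has both a three-factor and a four-factor expansion of the desired shape,
\begin{equation*}
(u,0)=(1,0)\begin{pmatrix} u & 1 \\ 1 & 0 \end{pmatrix}\begin{pmatrix} -u^{-1} & 1 \\ 1 & 0 \end{pmatrix}\begin{pmatrix} 0 & 1 \\ 1 & 0 \end{pmatrix}=(1,0)\begin{pmatrix} -u & 1 \\ 1 & 0 \end{pmatrix}\begin{pmatrix} u^{-1}-1 & 1 \\ 1 & 0 \end{pmatrix}\begin{pmatrix} 1 & 1 \\ 1 & 0 \end{pmatrix}\begin{pmatrix} 0 & 1 \\ 1 & 0 \end{pmatrix},
\end{equation*}
so we choose whichever of the two makes the total number of factors odd.

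The one substantive idea is the euclidean algorithm interpreted as a continued-fraction expansion; everything else is elementary $2\times 2$ matrix algebra. The delicate point — where an unguarded argument would break down — is matching the precise normal form of Proposition~\ref{link between product of idempotents and Hermite}: the fixed trailing factor $\begin{pmatrix} 0 & -1 \\ 1 & 0 \end{pmatrix}$, the exact parity $2n-1$ of the middle factors, and the unit thrown up by the gcd. These are handled, respectively, by the initial right-multiplication by $\begin{pmatrix} 0 & 1 \\ -1 & 0 \end{pmatrix}$, by having both a three- and a four-factor expansion of $(u,0)$ at hand, and by those same expansions.
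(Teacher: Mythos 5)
Your proposal is correct and follows essentially the same route as the paper's proof: reduce to a unimodular row $(a,b)$ with zero bottom row, run the euclidean algorithm on $(-b,a)$ to obtain a continued-fraction product of matrices $\begin{pmatrix} q & 1 \\ 1 & 0 \end{pmatrix}$, and invoke Proposition~\ref{link between product of idempotents and Hermite}. In fact you are more careful than the paper on two bookkeeping points it glosses over --- the odd parity $2n-1$ of the middle factors required by that proposition, and the fact that the algorithm terminates at a unit rather than at $1$ --- both of which your two explicit expansions of $(u,0)$ handle cleanly.
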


\begin{proof}
We have to show that any singular matrix $A\in M_2(R)$ is a product of
idempotent matrices. Lemma \ref{bottom rows zero copy(1)} shows that we may
assume $A$ is of the form $%
\begin{pmatrix}
a & b \\ 
0 & 0%
\end{pmatrix}%
.$ Let $d\in R$ be such that $aR+bR=dR$ and write $a=da^{\prime }$, $%
b=db^{\prime }$ for some $a^{\prime },b^{\prime }\in R$. We have 
\begin{equation*}
\begin{pmatrix}
a & b \\ 
0 & 0%
\end{pmatrix}%
=%
\begin{pmatrix}
d & 0 \\ 
0 & 0%
\end{pmatrix}%
\begin{pmatrix}
a^{\prime } & b^{\prime } \\ 
0 & 0%
\end{pmatrix}%
.
\end{equation*}
Since matrices $%
\begin{pmatrix}
d & 0 \\ 
0 & 0%
\end{pmatrix}%
$ are always product of idempotent matrices, we may assume without loss of
generality that $aR+bR=R$. The euclidean algorithm provides sequences of
elements $q_0,q_1,\dots,q_n,q_{n+1}$ and $r_0,r_1,\dots,r_n$ in $R$ such
that $-b=aq_0+r_0,\;a=r_0q_1+r_1,\;\dots,r_{n-2}=r_{n-1}q_n+1, \;
r_{n-1}=q_{n+1}$. We then have: 
\begin{equation*}
(-b,a)=(a,r_0)%
\begin{pmatrix}
q_0 & 1 \\ 
1 & 0%
\end{pmatrix}%
= (r_0,r_1)%
\begin{pmatrix}
q_1 & 1 \\ 
1 & 0%
\end{pmatrix}%
\begin{pmatrix}
q_0 & 1 \\ 
1 & 0%
\end{pmatrix}%
,
\end{equation*}
and finally 
\begin{equation*}
(-b,a)=(1,0)%
\begin{pmatrix}
q_{n+1} & 1 \\ 
1 & 0%
\end{pmatrix}
\begin{pmatrix}
q_n & 1 \\ 
1 & 0%
\end{pmatrix}%
\cdots 
\begin{pmatrix}
q_0 & 1 \\ 
1 & 0%
\end{pmatrix}%
.
\end{equation*}
Right multiplying this equality by the matrix $%
\begin{pmatrix}
0 & -1 \\ 
1 & 0%
\end{pmatrix}%
$ and using Proposition \ref{link between product of idempotents and Hermite}%
, we conclude that the matrix $%
\begin{pmatrix}
a & b \\ 
0 & 0%
\end{pmatrix}%
$ is a product of idempotent matrices, as required.
\end{proof}

\begin{corollary}
\label{Commutative Bezout and IP2 implies elementary Hermite} If $R$ is a
commutative B\'ezout domain with the $IP_2$ property then every $2\times 2$
invertible matrix is a product of elementary matrices and diagonal matrices
with invertible diagonal entries.
\end{corollary}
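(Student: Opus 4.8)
The plan is to use the $IP_2$ property to pin down the first row of an invertible matrix and then clear it. Let $A=\begin{pmatrix} a & b\\ c & d\end{pmatrix}\in GL_2(R)$. Since $R$ is commutative and $\det A=ad-bc\in U(R)$, the identity $ad+b(-c)\in U(R)$ gives $aR+bR=R$. The matrix $\begin{pmatrix} a & b\\ 0 & 0\end{pmatrix}$ is singular, so by the $IP_2$ hypothesis it is a product of $n$ idempotent matrices for some $n$, and Proposition \ref{link between product of idempotents and Hermite}, implication $(i)\Rightarrow(ii)$, yields elements $r_0,\dots,r_{2n-2}\in R$ with
\begin{equation*}
(a,b)=(1,0)\,M,\qquad M:=\begin{pmatrix} r_{2n-2} & 1\\ 1 & 0\end{pmatrix}\cdots\begin{pmatrix} r_0 & 1\\ 1 & 0\end{pmatrix}\begin{pmatrix} 0 & -1\\ 1 & 0\end{pmatrix}.
\end{equation*}

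Next I would check that $M$ lies in the subgroup of $GL_2(R)$ generated by elementary matrices and diagonal matrices with invertible diagonal entries. Each factor satisfies $\begin{pmatrix} r & 1\\ 1 & 0\end{pmatrix}=\begin{pmatrix} 1 & r\\ 0 & 1\end{pmatrix}\begin{pmatrix} 0 & 1\\ 1 & 0\end{pmatrix}$, and both $\begin{pmatrix} 0 & 1\\ 1 & 0\end{pmatrix}$ and $\begin{pmatrix} 0 & -1\\ 1 & 0\end{pmatrix}$ are products of elementary matrices together with a single diagonal matrix $\mathrm{diag}(1,-1)$ (a short explicit computation of the type already done in Lemma \ref{table of factorizations 2 by 2}). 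In particular every factor of $M$ has unit determinant, so $M\in GL_2(R)$ and $M$ is a product of the allowed matrices.

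Finally I would observe that, since $(a,b)$ is the first row of $M$, the first row of $AM^{-1}$ is $(a,b)M^{-1}=(1,0)MM^{-1}=(1,0)$; as $AM^{-1}$ is invertible this forces $AM^{-1}=\begin{pmatrix} 1 & 0\\ s & t\end{pmatrix}$ for some $s\in R$ and $t\in U(R)$, and $\begin{pmatrix} 1 & 0\\ s & t\end{pmatrix}=\begin{pmatrix} 1 & 0\\ s & 1\end{pmatrix}\begin{pmatrix} 1 & 0\\ 0 & t\end{pmatrix}$. Therefore $A=\bigl(AM^{-1}\bigr)M$ is a product of elementary matrices and diagonal matrices with invertible diagonal entries, which is the claim.

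I expect the only substantive step to be the first paragraph: recognizing that $IP_2$ applied to $\begin{pmatrix} a & b\\ 0 & 0\end{pmatrix}$ feeds directly into Proposition \ref{link between product of idempotents and Hermite}, where the real work has already been done. The rest is bookkeeping about generators of $GE_2$ and the standard trick of clearing the first row of $A$ by right multiplication by $M^{-1}$. The one point to keep an eye on is that commutativity is used both to deduce $aR+bR=R$ from invertibility and, implicitly, through the standing hypotheses of the cited proposition.
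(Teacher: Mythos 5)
Your proposal is correct and follows essentially the same route as the paper's proof: reduce to the first row via $aR+bR=R$, apply $IP_2$ and Proposition \ref{link between product of idempotents and Hermite} to write $(a,b)=(1,0)M$ with $M$ a product of matrices $\begin{pmatrix} r & 1\\ 1 & 0\end{pmatrix}$, check these are in the subgroup generated by elementary and invertible diagonal matrices, and clear the first row of $A$ by right multiplication by $M^{-1}$. The only cosmetic difference is your factorization of $\begin{pmatrix} r & 1\\ 1 & 0\end{pmatrix}$ via the swap matrix instead of the paper's explicit five-factor identity.
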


\begin{proof}

Let $A=%
\begin{pmatrix}
a & b \\ 
c & d%
\end{pmatrix}%
\in GL_2(R) $. We thus have $aR+bR=R$ and the $IP_2$ property shows that the
matrix $%
\begin{pmatrix}
a & b \\ 
0 & 0%
\end{pmatrix}
$ is a product of $n$ idempotent matrices, for some $n$. Proposition \ref%
{link between product of idempotents and Hermite} then shows that $%
(a,b)=(1,0)U$, where 
\begin{equation*}
U= 
\begin{pmatrix}
r_{2n-2} & 1 \\ 
1 & 0%
\end{pmatrix}%
\begin{pmatrix}
r_{2n-3} & 1 \\ 
1 & 0%
\end{pmatrix}%
\cdots 
\begin{pmatrix}
r_0 & 1 \\ 
1 & 0%
\end{pmatrix}%
\begin{pmatrix}
0 & -1 \\ 
1 & 0%
\end{pmatrix}
\in GL_2(R).
\end{equation*}
Since for $r \in R$ we have: 
\begin{equation*}
\begin{pmatrix}
r & 1 \\ 
1 & 0%
\end{pmatrix}%
=%
\begin{pmatrix}
1 & r \\ 
0 & 1%
\end{pmatrix}%
\begin{pmatrix}
1 & 0 \\ 
1 & 1%
\end{pmatrix}%
\begin{pmatrix}
1 & 0 \\ 
0 & -1%
\end{pmatrix}%
\begin{pmatrix}
1 & 1 \\ 
0 & 1%
\end{pmatrix}%
\begin{pmatrix}
1 & 0 \\ 
-1 & 1%
\end{pmatrix}%
,
\end{equation*}
we conclude that matrices of the form $%
\begin{pmatrix}
r & 1 \\ 
1 & 0%
\end{pmatrix}%
$ and hence also the matrix $U$ are products of elementary and diagonal
matrices with invertible diagonal entries. Let us write $AU^{-1}=%
\begin{pmatrix}
1 & 0 \\ 
c^{\prime } & d^{\prime }%
\end{pmatrix}
$ for some $c^{\prime },d^{\prime }\in R$. We then have $A=%
\begin{pmatrix}
1 & 0 \\ 
c^{\prime } & 1%
\end{pmatrix}%
\begin{pmatrix}
1 & 0 \\ 
0 & d^{\prime }%
\end{pmatrix}%
U$. This shows that $A$ is a product of elementary matrices and diagonal
matrices with invertible diagonal entries, as desired. 
\end{proof}

We say that a ring $R$ has the $IP$ property if every singular square matrix
over $R$ can be written as a product of idempotent matrices. Of course, the $%
IP$ property implies the $IP_2$ property. Theorem \ref{Elementary Hermite
and IP_2 implies IP} shows that in certain situations the converse is true,
that is: $IP_{2}$ property implies $IP$ property. The proof of this result
follows the pattern of Laffey's proof \cite{L}.

\begin{theorem}
\label{Elementary Hermite and IP_2 implies IP} Let $R$ be a B\'ezout domain
satisfying the $IP_{2}$ property. Then every singular matrix is a product of
idempotent matrices if $R$ has the $GE_{2}$ property. In particular, a
B\'ezout domain with stable range 1 has the $IP$ property.
\end{theorem}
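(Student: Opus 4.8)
The plan is to run Laffey's inductive scheme from \cite{L}, with the Hermite and $GE_{2}$ hypotheses playing the role of the Euclidean algorithm.

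\emph{From $GE_{2}$ to all sizes.} Since a B\'ezout domain is Hermite (Theorem \ref{Hermite is Bezout}), unimodular rows over $R$ are completable (Lemma \ref{relations between definitons}). I would first show, by induction on $n$, that every matrix in $GL_{n}(R)$ is a product of elementary matrices and diagonal matrices with invertible diagonal entries: given $M\in GL_{n}(R)$ with $n\ge 3$, the first column is unimodular, and applying the Hermite property to two coordinates at a time reduces it to $(1,0,\dots ,0)^{t}$ --- each such reduction being left multiplication by a matrix $\operatorname{diag}(I,P,I)$ with $P\in GL_{2}(R)$, hence a product of elementary and diagonal matrices by $GE_{2}$ --- after which one clears the first row by elementary matrices and recurses. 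Call this property $(\ast )$. Alongside it I would record, from $(\ast )$ and the explicit idempotent factorizations appearing in the proof of Lemma \ref{Factorizations of 3 by 3 matrices}, that for $W\in GL_{m}(R)$ with $m<n$ the padded block $\operatorname{diag}(W,0_{n-m})\in M_{n}(R)$ is a product of idempotent matrices: write $W$ as a product of elementary and diagonal matrices by $(\ast )$, note that the padded matrices $\operatorname{diag}(G,0_{n-m})$ multiply block-diagonally, and factor each single one exactly as in Lemma \ref{Factorizations of 3 by 3 matrices}.

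\emph{Normal form and the induction on $n$.} A B\'ezout domain is Ore, so it embeds in a division ring; a singular matrix over $R$ is therefore singular over that division ring and has a nonzero left null vector over $R$, so by Lemma \ref{bottom rows zero copy(1)} it is conjugate to a matrix with zero last row, and conjugation preserves products of idempotents. Hence it suffices to treat $A\in M_{n}(R)$ with zero last row, which I would do by induction on $n$; for $n\le 2$ this is exactly the $IP_{2}$ hypothesis. For $n\ge 3$: the columns of $A$ lie in the rank $n-1$ submodule $R^{n-1}\times\{0\}$, hence are dependent over the fraction division ring, and dividing out a gcd (B\'ezout) gives a unimodular $t$ with $At=0$. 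Completing $t$ to $Q\in GL_{n}(R)$ with last column $t$ (a product of elementary and diagonal matrices by $(\ast )$) yields $AQ=\operatorname{diag}(C,0)$ with $C\in M_{n-1}(R)$, and $\operatorname{diag}(C,0)$ is a product of idempotents: by the induction hypothesis applied to $C$ when $C$ is singular, and by the consequence of $(\ast )$ above when $C$ is invertible. Thus $A=\operatorname{diag}(C,0)\,Q^{-1}$ is a product of idempotents times a product of elementary and diagonal matrices, and it remains to absorb the latter one factor at a time.

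\emph{The main obstacle.} The crux, and where Laffey's technique is essential, is the claim that if $N\in M_{n}(R)$ is a product of idempotents and has a zero row, and $G$ is elementary or diagonal with invertible entries, then $NG$ is again a product of idempotents (it automatically still has a zero row). When $G$ only involves coordinates lying among the nonzero rows, $NG$ is again of block form and one recurses through the induction hypothesis and the consequence of $(\ast )$; when $G$ adds a multiple of a column into the zero-column slot, $NG$ is a conjugate of $N$; the remaining cases, where $G$ mixes a nonzero coordinate with the zero one, are handled by the small explicit idempotent identities of Lemmas \ref{table of factorizations 2 by 2}, \ref{Factorizations of 3 by 3 matrices} and \ref{idempotent matrices from ab+cd=1}, pushed through by a secondary induction on the number of factors of $Q^{-1}$ while keeping a running normal form for the partial products. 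This bookkeeping is the step I expect to require the most care.

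\emph{The stable range $1$ case.} Finally, a B\'ezout domain with stable range $1$ has the $IP_{2}$ property by Lemma \ref{Bezout and stable range $1$ implies IP2} and is $GE_{2}$ by Lemma \ref{consequences of stable range$1$}(a), so the hypotheses of the theorem are met and it has the $IP$ property.
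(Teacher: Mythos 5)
Your set-up (reducing to a zero last row via Lemma \ref{bottom rows zero copy(1)}, deriving $GE_n$ from Hermite plus $GE_2$, and noting that $\operatorname{diag}(W,0)$ is a product of idempotents for $W$ invertible) is sound and matches ingredients the paper also uses. But the proof has a genuine gap exactly where you flag "the main obstacle." You reduce to $A=\operatorname{diag}(C,0)\,Q^{-1}$ with $Q^{-1}=G_1\cdots G_k$ a product of elementary and diagonal matrices, and then need: if $N$ is a product of idempotents with a zero row and $G$ is elementary, then $NG$ is a product of idempotents. This claim is not a lemma you can quote or push through by "bookkeeping" --- it is essentially the theorem itself. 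Concretely, already after absorbing two factors the partial product degenerates: starting from $\operatorname{diag}(C,0)$ with $C$ invertible, one factor $I+ce_{in}$ gives $\left(\begin{smallmatrix} C & v \\ 0 & 0\end{smallmatrix}\right)$, and a subsequent factor $I+c'e_{nj}$ gives $\left(\begin{smallmatrix} C+vc'e_j^{t} & v \\ 0 & 0\end{smallmatrix}\right)$, whose top-left block need no longer be invertible; your "running normal form" is then just an arbitrary matrix with zero last row, i.e.\ the general case you set out to prove, so the secondary induction on the number of factors of $Q^{-1}$ makes no progress. Right multiplication by an invertible matrix does not preserve the property of being a product of idempotents, and nothing in your sketch supplies the mechanism that would make it do so here.

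The paper's proof avoids this trap by never absorbing a genuine invertible $n\times n$ factor. It writes $A=\left(\begin{smallmatrix} B & C \\ 0 & 0\end{smallmatrix}\right)$, and when $B$ is nonsingular it uses $GE_2$ and the Hermite property only to \emph{triangularize} $B$ on the left: $D=E_1\cdots E_lB$ upper triangular. The elementary matrices are then absorbed as the \emph{padded} matrices $\widehat{E_i^{-1}}=\operatorname{diag}(E_i^{-1},0)$, which have a zero row \emph{and} column, are products of idempotents by Lemma \ref{Factorizations of 3 by 3 matrices}, and act on a zero-last-row matrix exactly as $E_i^{-1}$ acts on its top block. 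The residual matrix $\left(\begin{smallmatrix} D & \ast \\ 0 & 0\end{smallmatrix}\right)$ is then handled by exploiting the triangularity of $D$: its lower-right $(n-1)\times(n-1)$ block $D_1$ has zero last row, the induction hypothesis factors $D_1=Y_1\cdots Y_m$, and a final argument using the projective-free structure of idempotents (Cohn's normal form for $Y_m$) strips off the first row and column to recurse. If you want to salvage your route, you would need to replace the one-factor-at-a-time absorption of $Q^{-1}$ by this kind of left-sided, padded absorption combined with a triangular normal form for the invertible part; as written, the crucial step is asserted rather than proved.
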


\begin{proof}
Let $A\in M_{n}(R)$ be a singular matrix. Lemma \ref{bottom rows zero
copy(1)} shows that we may assume that the bottom row of $A$ is zero. Let us
write 
\begin{equation*}
A=%
\begin{pmatrix}
B & C \\ 
0 & 0%
\end{pmatrix}%
,
\end{equation*}%
where $B\in M_{n-1}(R)$ and other matrices are of appropriate sizes.

We now proceed by induction on $n$. The case $n=1$ is trivial since $R$ is a
domain. If $n=2$ this is the $IP_{2}$ property. Let $n\in \mathbb{N}$ be
such that $n>2$. Write 
\begin{equation*}
A=%
\begin{pmatrix}
I_{n-1} & C \\ 
0 & 0%
\end{pmatrix}%
\begin{pmatrix}
B & 0 \\ 
0 & 1%
\end{pmatrix}%
.
\end{equation*}

If $B$ is singular, we apply induction hypothesis on $B$ and we obtain $A$
as a product of idempotents. So let us assume, $B$ is nonsingular. Since $R$
is left B\'ezout and hence left Hermite, by invoking $GE_2$ we can find a
sequence of elementary matrices $E_{1},\dots ,E_{l}\in M_{n-1}(R)$ such that 
$D:=E_{1}\cdots E_{l}B$ is an upper triangular matrix. For $M\in M_{n-1}(R)$%
, we define $\widehat{M}\in M_n(R)$ by 
\begin{equation*}
\widehat{M}:=%
\begin{pmatrix}
M & 0 \\ 
0 & 0%
\end{pmatrix}%
\in M_{n}(R).
\end{equation*}
We then have 
\begin{equation*}
A=%
\begin{pmatrix}
E_1^{-1}\cdots E_l^{-1}D & C \\ 
0 & 0%
\end{pmatrix}
=\widehat{E_{1}^{-1}}\widehat{E_{2}^{-1}}\cdots \widehat{E_{l}^{-1}}%
\begin{pmatrix}
D & E_{l}\cdots E_{1}C \\ 
0 & 0%
\end{pmatrix}%
.
\end{equation*}
Lemma \ref{Factorizations of 3 by 3 matrices} shows that $\widehat{E_{1}^{-1}%
},\widehat{E_{2}^{-1}},\cdots , \widehat{E_{l}^{-1}}$ are products of
idempotent matrices. We thus have to show that the matrix 
\begin{equation*}
\begin{pmatrix}
D & E_{l}\cdots E_{1}C \\ 
0 & 0%
\end{pmatrix}%
= 
\begin{pmatrix}
d_{1} & \left( 
\begin{array}{cccc}
d_{2} & .. & .. & d_{n}%
\end{array}%
\right) \\ 
0 & D_{1}%
\end{pmatrix}%
\end{equation*}
is a product of idempotents. The last row of the triangular matrix $D_1$ is
zero and hence our induction hypothesis shows that there exist idempotent
matrices $Y_1,\dots, Y_m$ such that $D_1=Y_1\cdots Y_m$. We may assume $%
Y_m\ne I_{n-1}$ and, since $D_1Y_m=D_1$, we can write 
\begin{equation*}
\begin{pmatrix}
d_{1} & \left( 
\begin{array}{cccc}
d_{2} & .. & .. & d_{n}%
\end{array}%
\right) \\ 
0 & D_{1}%
\end{pmatrix}%
=%
\begin{pmatrix}
1 & 0 \\ 
0 & D_1%
\end{pmatrix}%
\begin{pmatrix}
d_1 & (d_2 \cdots d_n) \\ 
0 & Y_m%
\end{pmatrix}%
.
\end{equation*}
The first left factor matrix on the right hand side is a product of
idempotent matrices. Thus we only need show that the second factor matrix on
the right hand side is a product of idempotent matrices. Now, since $R$ is
projective-free we know that $Y_{m}$ is similar to a diagonal matrix with
only ones and zeros on the diagonal (cf. \cite{C}, Proposition 0.4.7.). We
claim that the number, say $h$, of ones on this diagonal is strictly
positive or, in other words, we claim that $Y_m\ne 0$. Indeed, if $Y_m=0$
then $D_1=D_1Y_m=0$ and a row of the matrix $D$ is zero (since $n\geq 3$).
Hence $D=E_1\cdots E_lB$ is singular, this implies that $B$ is singular, a
contradiction since $B$ is non singular. We are thus reduced to show that a
matrix of the form 
\begin{equation*}
\begin{pmatrix}
d_{1} & \left( 
\begin{array}{cccc}
d_{2} & .. & .. & d_{h+1}%
\end{array}%
\right) & ... & d_{n} \\ 
0 & I_{h} & 0 & 0 \\ 
.. & .. & .. & .. \\ 
0 & 0 & 0 & 0%
\end{pmatrix}%
,
\end{equation*}%
for some $h>0$ is a product of idempotent matrices. This matrix is similar
to the following: 
\begin{equation*}
\begin{pmatrix}
I_{h} & 0 & 0 & \cdots & 0 \\ 
(d_2,\dots,d_{h+1}) & d_{1} & d_{h+2} & \cdots & d_{n} \\ 
\cdots & \cdots & \cdots & \cdots & \cdots \\ 
0 & 0 & 0 & 0 & 0%
\end{pmatrix}%
.
\end{equation*}
Performing row elementary operations on the first $h+1$ rows we reduce the
above matrix to the following: 
\begin{equation*}
\begin{pmatrix}
I_{h} & 0 & 0 & \cdots & 0 \\ 
0 & d_{1} & d_{h+2} & \cdots & d_{n} \\ 
\cdots & \cdots & \cdots & \cdots & \cdots \\ 
0 & 0 & 0 & 0 & 0%
\end{pmatrix}%
=%
\begin{pmatrix}
I_h & 0 \\ 
0 & *%
\end{pmatrix}%
,
\end{equation*}
where the bloc matrix * is an $(n-h)\times (n-h)$ matrix with the last row
zero. Since these operations can be accomplished by multiplying on the left
by products of idempotent matrices the induction hypothesis applied to the
matrix * completes the proof.

The proof of the particular case is clear since Lemma \ref{consequences of
stable range$1$} and \ref{Bezout and stable range $1$ implies IP2} show that
stable range $1$ implies $GE_2$ and $IP_2$, respectively.
\end{proof}

As a special case of the above theorem, the following corollary, parts (a)
and (c), gives Laffey's theorem (cf.\,\cite{L}) and Rao's theorem (cf.\,\cite%
{Bh}, Theorem 2), respectively.

\begin{corollary}
\label{4 cases when $R$ has IP} Let $R$ be a domain which is any one of the
following types:

\begin{enumerate}
\item[(a)] a euclidean domain,

\item[(b)] a local domain such that its radical $J=Rg=gR$ with $\cap
Rg^{n}=0 $,

\item[(c)] a commutative principal ideal domain with $IP_2$, or

\item[(d)] a local B\'{e}zout domain.
\end{enumerate}

Then every singular matrix over $R$ is a product of idempotent matrices (in
other words, $R$ has the $IP$ property).
\end{corollary}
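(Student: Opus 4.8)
The plan is to derive each of the four cases from Theorem \ref{Elementary Hermite and IP_2 implies IP}, which reduces everything to checking two things for the ring $R$ in question: that $R$ is a B\'ezout domain with the $IP_2$ property, and that $R$ is $GE_2$. (In some cases the ambient machinery in the excerpt already packages these together.)

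\noindent\textbf{Cases (a) and (c).} For a euclidean domain $R$, Corollary \ref{Euclidean rings have IP2} gives that $R$ has $IP_2$. A euclidean domain is a commutative PID, hence B\'ezout, and the euclidean algorithm makes it $GE_2$ (every $2\times2$ invertible matrix reduces to a diagonal one by elementary row operations, which is exactly the computation sketched in Corollary \ref{Euclidean rings have IP2}). So Theorem \ref{Elementary Hermite and IP_2 implies IP} applies and $R$ has $IP$. For case (c), $R$ is a commutative PID with $IP_2$ assumed; a commutative PID is B\'ezout, and Corollary \ref{Commutative Bezout and IP2 implies elementary Hermite} shows that a commutative B\'ezout domain with $IP_2$ is automatically $GE_2$. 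Again Theorem \ref{Elementary Hermite and IP_2 implies IP} yields $IP$.

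\noindent\textbf{Case (b).} Here $R$ is local with $J(R)=Rg=gR$ and $\bigcap_n Rg^n=0$. By Theorem \ref{local ring with principal generator for J satisfies IP2} (applied together with the Remark following it, which handles both the $l.ann$ and $r.ann$ sides under the two-sided hypothesis on $J$), every $2\times2$ singular matrix over $R$ is a product of idempotents, i.e.\ $R$ has $IP_2$. A local ring is a (left and right) B\'ezout domain here because any two nonzero elements are, up to units, powers of $g$, so one divides the other and any finitely generated one-sided ideal is principal; thus $R$ is a B\'ezout domain. Finally, a local ring has stable range $1$, so by Lemma \ref{consequences of stable range$1$} it is $GE_2$. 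Invoking Theorem \ref{Elementary Hermite and IP_2 implies IP} finishes this case.

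\noindent\textbf{Case (d).} A local B\'ezout domain has stable range $1$ (local rings always do), so by Lemma \ref{Bezout and stable range $1$ implies IP2} it has $IP_2$, and by Lemma \ref{consequences of stable range$1$} it is $GE_2$; this is precisely the hypothesis of the ``in particular'' clause of Theorem \ref{Elementary Hermite and IP_2 implies IP}, so $R$ has $IP$. I do not expect a genuine obstacle: the work has all been done in the preceding results, and the only care needed is bookkeeping — making sure in case (b) that the two-sided condition on $J$ lets us apply both Theorem \ref{local ring with principal generator for J satisfies IP2} and its Remark so that \emph{every} singular matrix (not just those with $l.ann\neq0$) is covered, and confirming in each case that ``B\'ezout domain'' in Theorem \ref{Elementary Hermite and IP_2 implies IP} is meant in the two-sided sense, which holds trivially in the commutative cases and follows from the divisibility structure in the local cases.
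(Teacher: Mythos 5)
Your proposal is correct and follows essentially the same route as the paper: every case is reduced to Theorem \ref{Elementary Hermite and IP_2 implies IP} (or its ``in particular'' clause) by verifying B\'ezout, $IP_2$, and $GE_2$. The only cosmetic differences are that in (a) the paper obtains $GE_2$ from Corollary \ref{Commutative Bezout and IP2 implies elementary Hermite} rather than directly from the euclidean algorithm, and in (b) the paper goes straight through the stable-range-$1$ particular case instead of routing $IP_2$ through Theorem \ref{local ring with principal generator for J satisfies IP2}; both variants are valid.
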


\begin{proof}
(a) It is clear that a euclidean domain is B\'ezout. On the other hand,
Corollaries \ref{Euclidean rings have IP2} and \ref{Commutative Bezout and
IP2 implies elementary Hermite} show that $R$ has the $IP_2$ as well as the $%
GE_2$ property. Therefore, by the above theorem $R$ has the $IP$ property.

(b) It is clear that a local ring has stable range $1$ and by our hypothesis 
$R$ is a valuation domain (as in the proof of Theorem \ref{local ring with
principal generator for J satisfies IP2}), and hence a B\'ezout domain. The
particular case mentioned in the above theorem yields the result. 

(c) This follows from the above theorem and from Corollary \ref{Commutative
Bezout and IP2 implies elementary Hermite}. 

(d) Since a local domain has stable range $1$, the result follows from the
particular case of the theorem.
\end{proof}

\section{Endomorphisms of Injective Modules}

Finally, we consider the endomorphism ring $S$ of an injective module $M$.
Recall, from the introduction, that an element $s\in S$ is called right singular if $rann(s)\ne 0$.  
We know that if any ring has the $IP$ property, then it need not be of
stable range $1$. However, for the endomorphism ring
of an injective (or even quasi-injective) module, we have the
following theorem. Its proof is straightforward.

\begin{theorem}
\label{Injective modules such that endo has the IP property} Let $M_{R}$ be
an injective module and $S=End(M_{R})$. If each right singular element $s\in
S$ can be expressed as a product of idempotents, then $S$ has stable range $%
1.$
\end{theorem}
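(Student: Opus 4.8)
The plan is to show that for the endomorphism ring $S = \mathrm{End}(M_R)$ of an injective module, the $IP$-type hypothesis on right singular elements forces the classical stable range $1$ characterization to hold. Recall that $S$ has stable range $1$ precisely when, for every $a, b \in S$ with $aS + bS = S$, there exists $y$ with $a + by \in U(S)$. A standard reformulation, which I would use as the target, is that $S$ has stable range $1$ if and only if every $a \in S$ that is \emph{right invertible} but not a unit satisfies a suitable "can be corrected to a unit" property --- but more directly, for endomorphism rings of injective modules there is the well-known fact that $S$ has stable range $1$ iff $S$ is Dedekind finite (this is precisely the place where injectivity enters, via the exchange property and the fact that $M$ injective makes $S$ a clean/exchange ring). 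So the heart of the proof reduces to: \textbf{$S$ is Dedekind finite.}

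\textbf{The key steps, in order.} First, I would invoke Lemma~\ref{I.P. property implies Dedekind finite}: since by hypothesis every right singular element of $S$ is a product of idempotents, that lemma (whose proof only uses that a product of idempotents which is a nonzero, and in the relevant case, noninvertible element has nonzero left annihilator) gives immediately that $S$ is Dedekind finite. Actually, one must check the lemma applies: in Lemma~\ref{I.P. property implies Dedekind finite} the hypothesis is "each right (left) singular element is a product of idempotents"; here we are told each \emph{right} singular element is a product of idempotents, which is exactly the hypothesis, so $S$ is Dedekind finite. Second, I would use the structural fact that the endomorphism ring of an injective (more generally quasi-injective) module is an exchange ring --- this is classical (Warfield; see also the fact that $S/J(S)$ is von Neumann regular and idempotents lift modulo $J(S)$ for such $S$). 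Third, I would apply the theorem that an exchange ring which is Dedekind finite and in which $S/J(S)$ is... more precisely, I would use the known result that \emph{a Dedekind-finite exchange ring in which every square matrix ring is also Dedekind finite has stable range $1$}, OR, cleaner: for $S = \mathrm{End}(M)$ with $M$ injective, $S$ being Dedekind finite already implies $M$ is directly finite, and then a direct module-theoretic argument shows stable range $1$.

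\textbf{A more self-contained route} that I would actually prefer for the writeup: given $a, b \in S$ with $aS + bS = S$, consider the map $(a\ b): M \oplus M \to M$; it is a split epimorphism, so $M \oplus M \cong M \oplus \ker$. Injectivity of $M$ lets one manipulate the decomposition; the goal is to find $y$ with $a + by$ an automorphism, equivalently an isomorphism $M \to M$. One writes down the candidate and must verify it is injective (using $r.ann$ considerations: if $a + by$ fails to be injective it is right singular, hence a product of idempotents, hence has nonzero left annihilator --- and one plays this against surjectivity and Dedekind finiteness to derive a contradiction) and surjective (here injectivity of $M$ is used: a monomorphism from $M$ to $M$ with the right cokernel behavior splits). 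The hypothesis that right singular elements are products of idempotents is exactly what rules out the pathological non-injective corrections.

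\textbf{The main obstacle} I anticipate is the verification that the corrected element $a + by$ can be chosen to be an \emph{isomorphism} rather than merely a monomorphism or epimorphism: bridging "mono" and "iso" is where Dedekind finiteness (from Lemma~\ref{I.P. property implies Dedekind finite}) must be combined with injectivity of $M$, and getting the annihilator bookkeeping exactly right --- showing that any non-unit right-singular correction leads, via its idempotent factorization, to a contradiction with $aS + bS = S$ --- is the delicate point. Everything else (that the relevant elements are right singular when they are non-units, hence covered by the hypothesis; that $S$ is exchange because $M$ is injective) is standard and can be cited. So although the theorem is billed as having a "straightforward" proof, the one genuinely substantive step is pinning down why the stable-range-$1$ correction exists, and the cleanest packaging is: \emph{right singular $\Rightarrow$ product of idempotents $\Rightarrow$ Dedekind finite (Lemma~\ref{I.P. property implies Dedekind finite})}, and then \emph{Dedekind finite $+$ $M$ injective $\Rightarrow$ stable range $1$} via the exchange property of $\mathrm{End}(M)$.
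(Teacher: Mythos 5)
Your outline follows the same architecture as the paper's proof: get Dedekind finiteness of $S$ from Lemma~\ref{I.P. property implies Dedekind finite}, pass to $S/J(S)$, use the structure theory of endomorphism rings of injective modules to get stable range $1$ there, and lift back to $S$. However, there is a genuine gap at the central step. You justify the implication ``$S$ Dedekind finite $\Rightarrow$ $S$ has stable range $1$'' by appealing to $S$ being an exchange (or clean) ring and $S/J(S)$ being von Neumann regular. That is not enough: for exchange rings, and even for von Neumann regular rings, direct finiteness does \emph{not} imply unit regularity or stable range $1$ (Bergman constructed a directly finite regular ring that is not unit regular; whether this can happen for exchange rings with various finiteness hypotheses is exactly the circle of questions around the separativity problem). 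Your hedged variant --- ``a Dedekind-finite exchange ring in which every square matrix ring is also Dedekind finite has stable range $1$'' --- is not a known theorem either. The ingredient that actually closes the argument, and which you never invoke, is that for $M$ injective the quotient $S/J(S)$ is regular \emph{and right self-injective} (Lam, Theorem 13.1), and a directly finite regular right self-injective ring is unit regular (Goodearl, Theorem 9.17); unit regular rings have stable range $1$, and stable range $1$ lifts modulo the Jacobson radical. A second, smaller omission: for this route you need $S/J(S)$, not just $S$, to be Dedekind finite; this is folklore but requires a short argument (the paper supplies one).

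Your alternative ``self-contained'' route via the split epimorphism $(a\ b)\colon M\oplus M\to M$ is only a sketch: the step where you ``play the idempotent factorization against surjectivity and Dedekind finiteness to derive a contradiction'' is precisely the substantive point, and as written it is not carried out, so it cannot be credited as a proof. If you repair the main route by replacing the exchange-ring appeal with the self-injectivity of $S/J(S)$ and Goodearl's theorem, you recover the paper's argument exactly.
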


\begin{proof}
Let $J=J(S)$ denote the Jacobson radical of $S=End(M_{R})$. Lemma \ref{I.P.
property implies Dedekind finite} shows that $S$ is Dedekind finite. It is a
folklore that $S/J$ is also Dedekind finite. We provide its proof for
reference only. For $x\in S$, let us write $\overline{x}:=x+J$. If $a,b\in S$
are such that $\overline{a}\overline{b}=\overline{1}$ then $1-ab\in J$ and
hence $1-(1-ab)=ab\in U(S)$, the set of units of $S$. Since $S$ is Dedekind
finite, we also have $ba\in U(S)$. Thus there exists $c\in S$ such that $%
bac=1$ and we get $\overline{bac}=\overline{1}$. Since $(\overline{b}%
\overline{a}-\overline{1})\overline{b}=\overline{0}$, we obtain by post
multiplying this by $\overline{ac},$ $\overline{b}\overline{a}=\overline{1}$%
, as desired. It is well-known that $S/J$ is a regular right self-injective
ring (cf. \cite{La}, Theorem 13.1). Because a von Neumann regular right
self-injective Dedekind finite ring is a unit regular ring (cf. \cite{G},
Theorem 9.17), it follows that $S/J(S)$ is a unit regular ring. This implies 
$S/J(S)$ has stable range $1$.

We now prove $S$ has stable range $1$. Let $aS+bS=S.$ Then ($%
a+J(S))(S+J(S))+(b+J(S))(S+J(S))=S+J(S).$ This gives $%
(a+J(S))+(b+J(S))(u+J(S))$ is invertible for some $u+J(S)$ in $S/J(S).$ This
implies that there exists $v\in S$ such that $(a+bu)v-1\in J(S)$ and hence $%
(a+bu)v$ is invertible. Since $S$ is Dedekind finite $a+bu$ is invertible.
This concludes the proof that $S$ is of stable range $1.$
\end{proof}

\begin{remark}
\textrm{Since the endomorphism ring of an infinite dimensional vector space
is not of stable range $1$, it follows that every right singular 
(equivalently non monomorphism) endomorphism can
be expressed as a product of projections if and only if the vector space is
of finite dimension.}
\end{remark}

\bigskip

\centerline {\bf Acknowledgement}

\smallskip 

1.We would like to thank K. P. S. Bhaskara Rao for several useful exchange
of emails regarding his paper, Products of Idempotent Matrices, LAA (2009).

2.This research is supported by King Abdulaziz University and, in
particular, by the Deanship of Scientific Research.

3.Andre Leroy would like to thank King Abdulaziz University for their
hospitality during his visit when this work was in progress.

\end{document}